\newtheorem{theorem}{Theorem}[section]
\newtheorem{Definition}{\bf Definition}[section]
\newtheorem{Thm}[Definition]{\bf Theorem}
\newtheorem{conjecture}[theorem]{Conjecture}
\theoremstyle{definition}
\newtheorem{definition}[theorem]{Definition}
\theoremstyle{remark}
\begin{document}

\title[An interesting family]{A remarkable sequence of integers}

\author{Victor H. Moll}
\address{Department of Mathematics,
Tulane University, New Orleans, LA 70118}
\email{vhm@math.tulane.edu}

\author{Dante V. Manna}
\address{Department of Mathematics,
Virginia Wesleyan College, Norfolk, VA 23502}
\email{dmanna@vwc.edu}

\subjclass{Primary 11B50, Secondary 05A15}

\date{\today}

\keywords{integrals, rational functons, valuations, unimodality, log-concavity}

\begin{abstract}
A survey of properties of a sequence of coefficients appearing in the 
evaluation of a quartic definite integral is presented. These properties 
are of analytical, combinatorial and number-theoretical nature. 

\end{abstract}

\maketitle

\newcommand{\no}{\noindent}
\newcommand{\nn}{\nonumber}
\newcommand{\ba}{\begin{eqnarray}}
\newcommand{\ea}{\end{eqnarray}}
\newcommand{\realpart}{\mathop{\rm Re}\nolimits}
\newcommand{\imagpart}{\mathop{\rm Im}\nolimits}
\newcommand{\stir}{\left[ \begin{smallmatrix}
                         m \\
                         j 
                      \end{smallmatrix}
                      \right]}
\newcommand{\quant}{\genfrac{\langle}{\rangle}{0pt}{}{n}{k}}
\newcommand{\quanuv}{\genfrac{\langle}{\rangle}{0pt}{}{n+mu}{mv}}
\newcommand{\quann}{{\langle { n } \rangle}}
\newcommand{\quank}{{\langle { k } \rangle}}
\newcommand{\quannk}{{\langle {n-k} \rangle}}
\newcommand{\quanone}{{\langle { 1 } \rangle}}
\newcommand{\quantwo}{{\langle { 2 } \rangle}}
\newcommand{\quanthree}{\genfrac{\langle}{\rangle}{0pt}{}{n+u}{v}}
\newcommand{\quantfour}{\genfrac{\langle}{\rangle}{0pt}{}{n+2u}{2v}}

\numberwithin{equation}{section}

\section{A quartic integral} \label{intro} 
\setcounter{equation}{0}

The problem of explicit evaluation of definite integrals has been greatly 
simplified due to the advances in symbolic languages like Mathematica and 
Maple. Some years ago the first author described in \cite{moll-notices} how he 
got interested in these topics and the appearance of the sequence 
of rational numbers
\begin{equation}
d_{l,m} = 2^{-2m} \sum_{k=l}^{m} 2^{k} \binom{2m-2k}{m-k} \binom{m+k}{m} 
\binom{k}{l},
\label{positive-0}
\end{equation}
\noindent
for $0 \leq l \leq m$. These are rational numbers with a simple denominator. 
The numbers $2^{2m}d_{l,m}$ are the remarkable integers in the title. 
These rational coefficients $d_{l,m}$ appeared in the evaluation of the 
{\em quartic integral} 
\begin{equation}
N_{0,4}(a;m) := \int_{0}^{\infty} \frac{dx}{(x^{4} + 2ax^{2} + 1)^{m+1}},
\label{int-deg4}
\end{equation}
\noindent
for $a> -1, \, m \in \mathbb{N}$. The formula
\begin{equation}
N_{0,4}(a;m) = 
\frac{\pi}{2} \frac{P_{m}(a)}{\left[ 2(a+1) \right]^{m + \tfrac{1}{2} } },
\label{int-qua}
\end{equation}
\noindent
with
\begin{equation}
P_{m}(a) = \sum_{l=0}^{m} d_{l,m}a^{l}
\label{polyP-def}
\end{equation}
\noindent
has been established by a variety of methods, some of which are 
 reviewed in \cite{amram}. 
The symbolic status of (\ref{int-deg4}) has not changed much since 
we last reported on \cite{moll-notices}. Mathematica 6.0 is unable to 
compute it when $a$ and $m$ are entered as parameters. On the other hand, the
corresponding indefinite integral is evaluated in terms of the Appell-F1 
function defined by
\begin{equation}
F_{1}(a;b_{1},b_{2};c;x,y) := \sum_{m=0}^{\infty} \sum_{n=0}^{\infty}
\frac{(a)_{m+n} (b_{1})_{m} (b_{2})_{n}}{m! n! (c)_{m+n}} x^{m}y^{n} 
\end{equation}
\noindent
as
\begin{eqnarray}
\int \frac{dx}{(x^{4}+2ax^{2}+1)^{m+1}} & = & 
x \, F_{1} \left[ \frac{1}{2}, 1+m,1+m, \frac{3}{2}, 
- \frac{x^{2}}{a_{+}}, 
\frac{x^{2}}{-a_{-}} \right], \nonumber
\end{eqnarray}
\noindent
where $a_{\pm} := a \pm \sqrt{-1+a^{2}}$.  Here $(a)_{k} = a(a+1) \cdots 
(a+k-1)$ is the ascending factorial. 

The coefficients $\{d_{l,m}: 0 \leq l \leq m \}$ have remarkable properties 
that will be discussed here. Those properties have mainly been discovered
by following the methodology of Experimental Mathematics, 
as presented in \cite{borw1, borw2}. Many of the properties presented 
here have been {\em guessed} using a symbolic language and subsequently 
established by traditional methods. The reader will find in \cite{irrbook}
a detailed introduction to the polynomial $P_{m}(a)$ in (\ref{polyP-def}).

\section{A triple sum expression for $d_{l,m}$} \label{sec-triple} 
\setcounter{equation}{0}

Our first approach to the evaluation of (\ref{int-qua}) was a byproduct
of a new proof of Wallis's formula, 
\begin{eqnarray}
J_{2,m} := \int_{0}^{\infty} \frac{dx}{(x^{2} + 1)^{m+1}} & = & 
\frac{\pi}{2^{2m+1}} \binom{2m}{m}, \label{wallis}
\end{eqnarray}
\noindent
where $m$ is a nonnegative integer. Wallis' formula has the equivalent form 
\begin{equation}
\frac{\pi}{2}  = \frac{2}{1} \cdot \frac{2}{3} \cdot \frac{4}{3} 
\cdot \frac{4}{5} \cdots \frac{2n}{2n-1} \cdot \frac{2n}{2n+1} \cdots.
\end{equation}
\noindent
The reader will find in \cite{irrbook} a proof of the equivalence of these two 
formulations. 

We describe in \cite{sarah1} our first proof of (\ref{wallis}). Section \ref{sec-single}
shows that a 
simple extension leads naturally to the concept of {\em rational Landen 
transformations}. These are transformations on the coefficients of a 
rational integrand that preserve the value of the integral. It is the 
rational analog of the well known transformation 
\begin{equation}
a \mapsto \frac{a+b}{2}, \quad b \mapsto \sqrt{ab}
\end{equation}
\noindent
that preserves the elliptic integral
\begin{equation}
G(a,b) = \int_{0}^{\pi/2} \frac{d \theta}{\sqrt{a^{2} \cos^{2} \theta + 
b^{2} \sin^{2} \theta}}. 
\end{equation}
\noindent
The reader will find in \cite{borwein1} and \cite{manna-moll3} details 
about these topics. 

The proof of Wallis' formula begins with the change 
of variables $x = \tan \theta$. This converts $J_{2,m}$ to its 
trigonometric form 
\begin{eqnarray}
J_{2,m} = \int_{0}^{\pi/2} \cos^{2m} \theta \, d \theta 
& = & \frac{\pi}{2^{2m+1}} \binom{2m}{m}. \label{wallis12}
\end{eqnarray}
\noindent
The usual elementary proof of (\ref{wallis12}) presented in textbooks
is to produce a recurrence
for $J_{2,m}$. Writing
$\cos^{2} \theta = 1 - \sin^{2} \theta$
and using integration by parts yields
\begin{eqnarray}
J_{2,m} & = & \frac{2m-1}{2m} J_{2,m-1}. \label{recur1}
\end{eqnarray}
\noindent
Now verify that the right side of (\ref{wallis12}) satisfies the 
same recursion and that both sides give $\pi/2$ for $m=0$. 

A second elementary proof of  Wallis's formula, also given in
 \cite{sarah1}, is done using a simple {\em double-angle trick}:
\begin{eqnarray}
J_{2,m} & = & \int_{0}^{\pi/2} \cos^{2m} \theta \, d \theta  = 
      \int_{0}^{\pi/2} \left( \frac{1  + \cos 2 \theta}{2} \right)^{m} 
\, d \theta.  \nonumber
\end{eqnarray}
\noindent
Now introduce the change of variables $\psi = 2 \theta$, expand and simplify the result by 
observing that the odd powers of cosine integrate to zero. Hence (\ref{wallis12}) is reduced 
to an inductive proof of the binomial recurrence
\begin{eqnarray}
J_{2,m} & = & 2^{-m} \sum_{i=0}^{\lfloor{ m/2 \rfloor}} 
\binom{m}{2i} J_{2,i}. \label{recur}
\end{eqnarray}
\noindent
Note that $J_{2,m}$ is uniquely determined by (\ref{recur}) along with the
initial value $J_{2,0} = \pi/2$. Thus  (\ref{wallis12}) now follows from the
identity 
\begin{eqnarray}
f(m) := \sum_{i=0}^{\lfloor{ m/2 \rfloor}} 
2^{-2i} \binom{m}{2i} \binom{2i}{i} & = &
2^{-m} \binom{2m}{m} \label{sum1}
\end{eqnarray}
\noindent
since (\ref{sum1}) can be written as 
\begin{eqnarray}
J_{2,m} & = & 2^{-m} \sum_{i=0}^{\lfloor{m/2\rfloor}} 
\binom{m}{2i} J_{2,i}, \nonumber
\end{eqnarray}
\noindent
where
\begin{eqnarray}
J_{2,i} & = & \frac{\pi}{2^{2i+1}} \binom{2i}{i}. \nonumber
\end{eqnarray}

The last step is to verify the identity (\ref{sum1}). This 
can be done {\em mechanically} using the theory 
developed by Wilf and Zeilberger, which is explained in
\cite{nemes,aequalsb}.  The sum in (\ref{sum1}) is the example used 
in \cite{aequalsb} (page 113) to 
illustrate their method. 

\smallskip

\noindent
{\bf Note}. The WZ-method is an algorithm  in Computational Algebra that, among 
other things, will produce for a hypergeometric/holonomic 
sum, such as (\ref{bin-sum}), a 
recurrence like (\ref{rec-22}). The reader will find in \cite{nemes} 
and \cite{aequalsb} information about this algorithm.  \\

\noindent The command
$$ct(binomial(m,2i) \, binomial(2i,i) 2^{-2i}, 1, i, m,N)$$
produces
\begin{eqnarray}
f(m+1) & = & \frac{2m+1}{m+1} \; f(m), \label{recur2}
\end{eqnarray}
\noindent
a recursion satisfied by the sum.  One completes the proof by
verifying that $2^{-m} \binom{2m}{m}$ satisfies the same 
recursion. Note that (\ref{recur1}) and (\ref{recur2}) are 
equivalent since $J_{2,m}$ and $f(m)$ differ only by a factor of 
$\pi/2^{m+1}$.

We have seen that Wallis's formula can be proven by an angle-doubling trick followed by a hypergeometric sum 
evaluation.  Perhaps the most interesting application of the double-angle trick
 is in the theory
of rational Landen transformations. See \cite{manna-moll3} for 
an overview. 

Now we employ the same ideas in the
evaluation of (\ref{int-qua}). The 
change of variables $x = \tan \theta$ 
yields 
\begin{equation}
N_{0,4}(a;m) = \int_{0}^{\pi/2} \left( \frac{\cos^{4} \theta}
{\sin^{4}\theta + 2a \sin^{2}\theta \cos^{2}\theta + \cos^{4}\theta} 
\right)^{m+1} \times 
\frac{d \theta}{\cos^{2} \theta}. \nonumber
\end{equation}
\noindent
Observe first that the denominator of the trigonometric function in the 
integrand is a polynomial in $u = 2 \theta$. In detail, 
\begin{equation}
\sin^{4}\theta + 2a \sin^{2}\theta \cos^{2}\theta + \cos^{4}\theta 
 = 2 \left[ (1+a) + (1-a) \cos^{2} u \right].
\nonumber
\end{equation}
\noindent 
In terms of the double-angle $u = 2 \theta$, the original integral becomes
\begin{equation}
N_{0,4}(a;m) = 2^{-(m+1)} \int_{0}^{\pi} 
\left( \frac{(1+ \cos u)^{2}}{(1+a) + (1-a) \cos^{2}u } \right)^{m+1} 
\times \frac{du}{1+ \cos u}. \nonumber
\end{equation}
\noindent
Next, expand the binomial $(1+ \cos u)^{2m+1}$ and check that 
\begin{equation}
\int_{0}^{\pi} \left[ (1 + a) + (1-a) \cos^{2}u \right]^{-(m+1)} 
\, \cos^{j} u \, du = 0
\label{vanishing}
\end{equation}
\noindent
for $j$ odd. The vanishing of half of the terms in the binomial expansion 
turns out to be a crucial property. The remaining integrals, those with 
$j$ even, can be simplified by using the double-angle trick one more time. 
The result is 
\begin{equation}
N_{0,4}(a;m)  =   \sum_{j=0}^{m} 2^{-j} \binom{2m+1}{2j} 
\int_{0}^{\pi} \left[ (3+a) + (1-a) \cos v \right]^{-(m+1)} ( 1 + \cos v)^{j} 
\, dv, \nonumber 
\end{equation}
\noindent
where $v = 2u$ and we have used the symmetry of cosine about $v = \pi$ to 
reduce the integrals form $[0, 2 \pi]$ to $[0, \pi]$. The  familiar 
change of variables $z = \tan(v/2)$ produces (\ref{int-qua}) with 
the complicated formula
\begin{equation}
d_{l,m} = \sum_{j=0}^{l} \sum_{s=0}^{m-l} \sum_{k=s+l}^{m} 
\frac{(-1)^{k-l-s}}{2^{3k}} 
\binom{2k}{k} \binom{2m+1}{2s+2j} \binom{m-s-j}{m-k} \binom{s+j}{j} 
\binom{k-s-j}{l-j}. \nonumber 
\end{equation}

\medskip

\noindent
{\bf Note}. In spite of its complexity, obtaining this expression was
the first step in the mathematical road described in this paper. 
It was precisely what Kauers and Paule \cite{kauers-paule} required to clarify 
some combinatorial properties of $d_{l,m}$. Some arithmetical properties 
can be read directly from it. For 
example, we can see that $d_{l,m}$ is a rational number and that 
$2^{3m} d_{l,m} \in \mathbb{Z}$; that is, its denominator is a power of $2$
bounded above by $3m$. Improvements on this bound are outlined in
Section \ref{sec-single}.

\section{A single sum expression for $d_{l,m}$} \label{sec-single} 
\setcounter{equation}{0}

The idea of doubling the angle that proved productive in Section 
\ref{sec-triple} can be expressed in the realm of rational functions via the 
change of variables 
\begin{equation}
y = R_{2}(x) := \frac{x^{2}-1}{2x}. 
\label{transf-r2}
\end{equation}
\noindent
The inverse has two branches 
\begin{equation}
x = y \pm \sqrt{y^{2}+1},
\end{equation}
\noindent
where the plus sign is valid for $x \in (0, \, +\infty)$ and the other one 
on $(-\infty, 0)$. The rational function $R_{2}$ arises from the 
identity
\begin{equation}
\cot 2 \theta = R_{2}(\cot \theta). 
\end{equation}
\noindent
This change of variables gives the proof of the next theorem.

\begin{Thm}
\label{thm-inv}
Let $f$ be a rational function and assume that the integral of $f$ over 
$\mathbb{R}$ is finite. Then 
\begin{eqnarray}
\int_{-\infty}^{\infty} f(x) \, dx & =  & 
\int_{-\infty}^{\infty}  \left[ f(y + \sqrt{y^{2}+1}) + f(y - \sqrt{y^{2}+1}) 
\right] \, dy +  \label{invar} \\
& + & \int_{-\infty}^{\infty} 
\left[ f(y + \sqrt{y^{2}+1}) - f(y - \sqrt{y^{2}+1}) 
\right] \, \frac{y \, dy}{\sqrt{y^{2}+1}}.
\nonumber
\end{eqnarray}
\noindent
Moreover, if $f$ is an 
{\em even} rational function,  the identity (\ref{invar}) remains valid if 
one replaces each interval of integration by  ${\mathbb{R}}^{+}$.
\end{Thm}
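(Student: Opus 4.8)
The plan is to treat this as a direct change of variables, exploiting the fact that the map $y = R_2(x)$ folds the two rays $(0,\infty)$ and $(-\infty,0)$ onto a single copy of $\mathbb{R}$. First I would split $\int_{-\infty}^{\infty} f(x)\,dx = \int_0^{\infty} f(x)\,dx + \int_{-\infty}^{0} f(x)\,dx$ and observe that $R_2$ restricts to an increasing bijection from each ray onto all of $\mathbb{R}$: on $(0,\infty)$ the relevant inverse branch is $x_+(y) := y+\sqrt{y^2+1}$, and on $(-\infty,0)$ it is $x_-(y) := y-\sqrt{y^2+1}$. A quick check shows $x_+(y)>0$ and $x_-(y)<0$ for all $y$, that both branches are strictly increasing (their derivatives $1\pm y/\sqrt{y^2+1}$ are positive since $|y|<\sqrt{y^2+1}$), and that $x_+$ sweeps $(0,\infty)$ while $x_-$ sweeps $(-\infty,0)$ as $y$ runs over $\mathbb{R}$; in particular orientation is preserved in both substitutions.

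Next I would carry out the two substitutions. With $x=x_\pm(y)$ one has $dx = \bigl(1 \pm y/\sqrt{y^2+1}\bigr)\,dy$, so
\[
\int_0^{\infty} f(x)\,dx = \int_{-\infty}^{\infty} f(x_+(y))\,\Bigl(1+\tfrac{y}{\sqrt{y^2+1}}\Bigr)\,dy,
\]
and similarly for the negative ray with $x_-$ and the sign on the weight reversed. Adding the two expressions and collecting the unweighted terms into a symmetric part and the weighted terms into an antisymmetric part yields exactly (\ref{invar}). Convergence is not an issue: a rational function with finite integral over $\mathbb{R}$ has no real poles and decays at least like $x^{-2}$, hence is absolutely integrable, so the splitting and the two changes of variables are all legitimate.

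For the even case the key observation is the reflection identity $x_+(-y) = -x_-(y)$ (equivalently $x_-(-y) = -x_+(y)$), immediate from the definitions. When $f$ is even this gives $f(x_+(-y)) = f(x_-(y))$ and $f(x_-(-y)) = f(x_+(y))$, so the symmetric integrand $f(x_+(y))+f(x_-(y))$ is an even function of $y$, while $f(x_+(y))-f(x_-(y))$ is odd and, multiplied by the odd factor $y/\sqrt{y^2+1}$, again produces an even integrand. Hence both integrals on the right of (\ref{invar}) can be halved to $\mathbb{R}^+$; since the left side also halves ($\int_{-\infty}^{\infty} f = 2\int_0^{\infty} f$ for even $f$), the common factor of $2$ cancels and the identity survives verbatim over $\mathbb{R}^+$.

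The computations themselves are routine; the only real care is the bookkeeping in the first step — correctly matching each branch $x_\pm$ to its ray and confirming that both substitutions preserve orientation — since a sign error there would corrupt the separation into symmetric and antisymmetric parts. I expect this branch-and-orientation tracking, rather than any analytic difficulty, to be the main point to get right.
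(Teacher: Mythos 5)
Your change of variables is exactly the proof the paper has in mind --- the paper offers nothing beyond the remark that the substitution $y = R_{2}(x)$ ``gives the proof,'' and your branch bookkeeping ($x_{\pm}$ matched to the correct rays, both branches orientation-preserving) together with the reflection identity $x_{+}(-y) = -x_{-}(y)$ for the even case is precisely the intended argument, carried out correctly up to the last step.

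There is, however, a genuine analytic gap in that last step, hiding behind your assertion that ``convergence is not an issue.'' The two substituted integrals $\int_{-\infty}^{\infty} f(x_{\pm}(y))\bigl(1 \pm y/\sqrt{y^{2}+1}\bigr)\,dy$ do converge absolutely, because the weight $1 \pm y/\sqrt{y^{2}+1}$ decays like $1/(2y^{2})$ exactly on the side where $x_{\pm}(y) \to 0$ and $f(x_{\pm}(y)) \to f(0)$ fails to decay. But after you regroup into symmetric and antisymmetric parts, the two integrals displayed in (\ref{invar}) need not converge separately: as $y \to -\infty$ one has $x_{+}(y) \to 0^{+}$ while $x_{-}(y) \to -\infty$, so $f(x_{+}(y)) + f(x_{-}(y)) \to f(0)$, and unless $f(0) = 0$ the ``symmetric'' integral diverges. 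Concretely, for $f(x) = 1/(1+x^{2})$ one computes $1 + x_{\pm}^{2} = 2\sqrt{y^{2}+1}\,\bigl(\sqrt{y^{2}+1} \pm y\bigr)$, whence $f(x_{+}(y)) + f(x_{-}(y)) \equiv 1$; the same obstruction applies to the paper's own integrand $Q$, for which $Q(0) = 1$. The defect is inherited from the statement itself, and the cure is the one the paper tacitly adopts in its application: keep the two bracketed terms combined as a single integrand (this is exactly the role of $Q_{1}$ in the next theorem), i.e., stop your proof one step earlier, before splitting the absolutely convergent sum $f(x_{+}(y))\bigl(1 + y/\sqrt{y^{2}+1}\bigr) + f(x_{-}(y))\bigl(1 - y/\sqrt{y^{2}+1}\bigr)$ into two pieces that may individually diverge. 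With that reading, your argument --- including the halving to $\mathbb{R}^{+}$ in the even case, where the parity computation should likewise be done on the combined integrand --- is complete.
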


\begin{Thm}
For $m \in \mathbb{N}$,  let 
\begin{equation}
Q(x) = \frac{1}{(x^{4}+2ax^{2}+1)^{m+1}}. 
\end{equation}
\noindent
Define
\begin{eqnarray}
Q_{1}(y) & := & \left[ Q(y+\sqrt{y^{2}+1}) + Q(y-\sqrt{y^{2}+1}) 
\right] +  \nonumber \\
& + & \frac{y}{\sqrt{y^{2}+1}} 
\left[ Q(y+\sqrt{y^{2}+1}) - Q(y-\sqrt{y^{2}+1}) 
\right].   \nonumber
\end{eqnarray}
\noindent
Then 
\begin{equation}
Q_{1}(y) =  \frac{T_{m}(2y)}{2^{m}(1+a+2y^{2})^{m+1}},
\end{equation}
\noindent
where 
\begin{equation}
T_{m}(y) = \sum_{k=0}^{m} \binom{m+k}{m-k} y^{2k}. 
\label{bin-sum}
\end{equation}
\end{Thm}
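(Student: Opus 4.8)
The claim is a purely algebraic identity, so the plan is to evaluate $Q$ on the two branches $x_{\pm} = y \pm \sqrt{y^2+1}$ and simplify everything in terms of the symmetric functions of $x_+,x_-$. The crucial observation is that $x_+$ and $x_-$ are precisely the two roots of $t^2 - 2yt - 1 = 0$, so that
\[
x_+ + x_- = 2y, \qquad x_+ x_- = -1, \qquad x_+ - x_- = 2\sqrt{y^2+1},
\]
and, most usefully, $x_\pm^2 = 2y\,x_\pm + 1$. Everything radical will be absorbed into these relations.

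First I would simplify the denominator of $Q$ on each branch. Using $x_\pm^2 = 2y\,x_\pm + 1$ to reduce $x_\pm^4$ and $x_\pm^2$, a short computation gives $x_\pm^4 + 2a\,x_\pm^2 + 1 = 2(1+a+2y^2)\,x_\pm^2$, whence $Q(x_\pm) = 2^{-(m+1)}(1+a+2y^2)^{-(m+1)}\,x_\pm^{-(2m+2)}$. Since $x_+x_- = -1$ forces $x_\pm^{-1} = -x_\mp$ and the exponent $2m+2$ is even, I may replace $x_\pm^{-(2m+2)}$ by $x_\mp^{2m+2}$; this turns $Q(x_\pm)$ into an honest polynomial in $x_+,x_-$ up to the common prefactor, and is the step that eliminates the square roots.

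Substituting into the definition of $Q_1$ and writing $y = (x_++x_-)/2$ and $\sqrt{y^2+1} = (x_+-x_-)/2$, the bracketed combination becomes
\[
(x_+^{2m+2}+x_-^{2m+2}) - \frac{x_++x_-}{x_+-x_-}\,(x_+^{2m+2}-x_-^{2m+2}).
\]
Clearing the denominator $x_+-x_-$ and using $x_+x_-=-1$, the cross terms collapse and this reduces to $2(x_+^{2m+1}-x_-^{2m+1})/(x_+-x_-)$. Reinstating the prefactor $2^{-(m+1)}(1+a+2y^2)^{-(m+1)}$ yields
\[
Q_1(y) = \frac{1}{2^m(1+a+2y^2)^{m+1}}\cdot\frac{x_+^{2m+1}-x_-^{2m+1}}{x_+-x_-},
\]
so the entire theorem reduces to the single polynomial identity $\dfrac{x_+^{2m+1}-x_-^{2m+1}}{x_+-x_-} = T_m(2y)$.

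The main obstacle is this last identity, which is where the combinatorics enters. My plan is to show both sides obey the same second-order recurrence in $m$. The left side, call it $R_m$, is a symmetric function of $x_+^2,x_-^2$; since these are the roots of $t^2 - (4y^2+2)t + 1 = 0$ (because $x_+^2+x_-^2 = 4y^2+2$ and $x_+^2x_-^2 = 1$), it satisfies $R_m = (4y^2+2)R_{m-1} - R_{m-2}$ with $R_0 = 1$ and $R_1 = 4y^2+1$. For the right side I would verify the same recurrence and initial data by comparing coefficients of $(2y)^{2k}$; after reindexing the shifted sums this reduces to the binomial identity $\binom{m+k}{m-k} = \binom{m+k-2}{m-k} + 2\binom{m+k-1}{m-k-1} - \binom{m+k-2}{m-k-2}$, which follows from two applications of Pascal's rule (alternatively, this recurrence for the binomial sum is exactly what Zeilberger's algorithm returns, in keeping with the WZ approach used earlier in the paper). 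The only delicate point is bookkeeping the summation endpoints $k=0$ and $k=m$ so that the shifted sums line up correctly; once that is handled the induction closes and the theorem follows.
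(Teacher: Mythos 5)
Your proof is correct, and its skeleton coincides with the paper's: the paper sets $\phi = y + \sqrt{y^{2}+1}$, notes $y - \sqrt{y^{2}+1} = -\phi^{-1}$, and after the same algebraic collapse reduces the theorem to the identity $(\phi^{2m+1} + \phi^{-(2m+1)})/(\phi + \phi^{-1}) = T_{m}(\phi - \phi^{-1})$, which is exactly your $(x_{+}^{2m+1} - x_{-}^{2m+1})/(x_{+} - x_{-}) = T_{m}(2y)$ under $x_{+} = \phi$, $x_{-} = -\phi^{-1}$; both proofs then finish by showing the two sides satisfy the three-term recurrence $c_{m+2} - (\phi^{2} + \phi^{-2})c_{m+1} + c_{m} = 0$, your coefficient $4y^{2}+2$ being precisely $\phi^{2}+\phi^{-2}$. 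The one genuine divergence is in how the recurrence is verified for the sum $T_{m}$: the paper delegates this step to the WZ-method, whereas you check it by hand via coefficient comparison and Pascal's rule, which makes your argument fully elementary and self-contained; the paper's route buys automation, yours buys transparency. Your binomial identity is true and the endpoint bookkeeping does close (with the convention that binomial coefficients with negative lower index vanish), though strictly it takes three applications of Pascal's rule rather than two: expanding twice gives $\binom{m+k}{m-k} = \binom{m+k-2}{m-k} + 2\binom{m+k-2}{m-k-1} + \binom{m+k-2}{m-k-2}$, and one further application converts $2\binom{m+k-2}{m-k-1} + 2\binom{m+k-2}{m-k-2}$ into $2\binom{m+k-1}{m-k-1}$, yielding your stated form. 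Finally, your symmetric treatment of the two branches via $x_{\pm}^{2} = 2y\,x_{\pm} + 1$, together with the observation that $x_{+}^{2}, x_{-}^{2}$ are the roots of $t^{2} - (4y^{2}+2)t + 1 = 0$, supplies a cleaner justification of the recurrence for the left-hand side than the paper's bare remark that it is straightforward.
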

\begin{proof}
Introduce the variable  $\phi = y + \sqrt{y^{2}+1}$. Then $y - \sqrt{y^{2}+1} = 
- \phi^{-1}$ and  $y = \tfrac{1}{2}(\phi - \phi^{-1})$.  Moreover, 
\begin{eqnarray}
Q_{1}(y) & = & \left[ Q(\phi) + Q(\phi^{-1}) \right] + 
\frac{\phi^{2}-1}{\phi^{2}+1} 
\left( Q(\phi) - Q(\phi^{-1}) \right) \nonumber \\
 & = & \frac{2}{\phi^{2}+1} \left[ \phi^{2}Q(\phi) + Q(\phi^{-1}) 
\right] \nonumber \\
& := & S_{m}(\phi). \nonumber 
\end{eqnarray}
\noindent
The result of the theorem is therefore equivalent to
\begin{equation}
2^{m} \left(1 + a + \tfrac{1}{2}(\phi - \phi^{-1})^{2} ) \right)^{m+1} \, 
S_{m}(\phi) = T_{m}(\phi - \phi^{-1}). 
\label{newform}
\end{equation}
\noindent
A direct simplification of the left hand side of (\ref{newform}) shows that 
this identity is equivalent to proving
\begin{equation}
\frac{\phi^{2m+1}+ \phi^{-(2m+1)}}{\phi+\phi^{-1}} = T_{m}(\phi - \phi^{-1}). 
\label{newform2}
\noindent
\end{equation}

To establish this, one simply 
checks that both sides of (\ref{newform2}) satisfy the 
second order recurrence 
\begin{equation}
c_{m+2}-  ( \phi^{2} + \phi^{-2}) c_{m+1} + c_{m} = 0, 
\label{rec-22}
\end{equation}
\noindent
and the values for $m=0$ and $m=1$ match. This is straight-forward for the 
expression on the left hand side, while the WZ-method 
settles the right hand side. 
\end{proof}

\bigskip

We now prove (\ref{int-qua}). 
The identity in Theorem \ref{thm-inv} shows that 
\begin{equation}
\int_{0}^{\infty}  Q(x) \, dx  =  \int_{0}^{\infty} Q_{1}(y) \, dy, 
\label{equalint}
\end{equation}
\noindent
and this last integral can be evaluated in elementary terms. Indeed, 
\begin{eqnarray}
\int_{0}^{\infty}  Q_{1}(y) \, dy & = & \int_{0}^{\infty} 
\frac{T_{m}(2y) \, dy}{2^{m} 
(1+ 2y^{2})^{m+1}} \nonumber \\
& = & \frac{1}{2^{m}} \sum_{k=0}^{m} \binom{m+k}{m-k} 
\int_{0}^{\infty} \frac{(2y)^{2k} \, dy}{(1 + a + 2y^{2})^{m+1}}. \nonumber
\end{eqnarray}
\noindent
The change of variables $y = t \, \sqrt{1+a}/\sqrt{2}$ gives
\begin{equation}
\int_{0}^{\infty}  Q_{1}(y) \, dy = \frac{1}{[2(1+a)]^{m+1/2}} 
\sum_{k=0}^{m} \binom{m+k}{m-k} 2^{k} (1+a)^{k} 
\int_{0}^{\infty}  \frac{t^{2k} \, dt}{(1+t^{2})^{m+1}}, \nonumber 
\end{equation}
\noindent
and the elementary identity
\begin{equation}
\int_{0}^{\infty} \frac{t^{2k} \, dt}{(1+t^{2})^{m+1}} = \frac{\pi}{2^{2m+1}} 
\binom{2k}{k} \binom{2m-2k}{m-k} \binom{m}{k}^{-1}
\nonumber
\end{equation}
gives
\begin{equation}
\int_{0}^{\infty} Q_{1}(y) \, dy 
= \frac{\pi}{2^{2m+1}} \frac{1}{[2(1+a)]^{m+1/2}} 
\sum_{k=0}^{m} \binom{m+k}{m-k} 2^{k} 
\binom{2k}{k} \binom{2m-2k}{m-k} \binom{m}{k}^{-1}
(1+a)^{k}.
\nonumber
\end{equation}
\noindent
This can be simplified further using 
\begin{equation}
\binom{m+k}{m-k} \binom{2k}{k} = \binom{m+k}{m} \binom{m}{k}
\end{equation}
\noindent 
and the equality (\ref{equalint}) to produce 
\begin{equation}
\int_{0}^{\infty} Q(y) \, dy = \frac{\pi}{2^{2m+1}} \frac{1}{[2(1+a)]^{m+1/2}} 
\sum_{k=0}^{m} 2^{k} \binom{m+k}{m}  \binom{2m-2k}{m-k} (1+a)^{k}. 
\end{equation}
This completes the proof of (\ref{int-qua}).  The 
coefficients $d_{l,m}$ are given by 
\begin{equation}
d_{l,m} = 2^{-2m} \sum_{k=l}^{m} 2^{k} \binom{2m-2k}{m-k} \binom{m+k}{m} 
\binom{k}{l}.
\label{positive}
\end{equation}
\noindent
This is clearly an improvement over the expression for $d_{l,m}$ given in the 
previous section. 

We now see that $d_{l,m}$ is a {\em positive} rational number. The 
bound on the  denominator is now improved to $2m-1$. This comes directly from 
(\ref{positive}) and the familiar fact 
that the  central binomial coefficients $\binom{2m}{m}$ are even. 

\section{A finite sum} \label{sec-finite} 
\setcounter{equation}{0}

The previous two sections have provided two expressions for the 
polynomial $P_{m}(a)$. The elementary evaluation in Section \ref{sec-triple}
gives 
\begin{eqnarray}
P_{m}(a) & = & \sum_{j = 0}^{m} \binom{2m+1}{2j} (a + 1)^{j} 
\sum_{k=0}^{m - j} \binom{m - j}{k} \binom{2(m-k)}{m-k} 2^{-3(m-k)} 
(a - 1)^{m - k - j} \nonumber \\
 & &   \label{poly1}
\end{eqnarray}
\noindent
and the results described in Section \ref{sec-single} provide the alternative 
expression
\begin{eqnarray}
P_{m}(a) & = & 2^{-m} \sum_{k=0}^{m} 
           2^{-k} \binom{2k}{k} \binom{2m-k}{m} (a+1)^{m-k}.  \label{poly2} \\
\nonumber
\end{eqnarray}
\noindent
The reader will find details in \cite{sarah1}. Comparing 
the values at $a=1$ given by both expressions leads to 
\begin{equation}
\sum_{k=0}^{m} 2^{-2k} \binom{2k}{k} \binom{2m+1}{2k} = 
\sum_{k=0}^{m} 2^{-2k} \binom{2k}{k} \binom{2m-k}{m}. 
\label{pretty} 
\end{equation}

\noindent
The identity (\ref{pretty}) can be verified using D. Zeilberger's package
EKHAD \cite{aequalsb}. Indeed, EKHAD tells us that both sides of 
(\ref{pretty}) satisfy the recursion 
\begin{equation}
(2m+3)(2m+2)f(m+1) =  (4m+5)(4m+3) f(m).  \nonumber
\end{equation}
\noindent
To conclude the proof by recursion, we check that they agree at $m=1$.  
A symbolic evaluation of both sides of (\ref{pretty}) leads to 
\begin{equation}
\frac{2^{2m+1} \Gamma(2m+3/2)}{\sqrt{\pi} \, \Gamma(2m+2)} = 
- \frac{2^{2m+1} \, \sqrt{\pi}}{\Gamma(-2m-1/2) \Gamma(2m+2)}. 
\end{equation}
\noindent
The identity (\ref{pretty}) now follows from 
\begin{equation}
\Gamma( m + \tfrac{1}{2} ) = \frac{\sqrt{\pi}}{2^{2m}} \frac{(2m)!}{m!} \text{ for } m \in \mathbb{N}.
\end{equation}

An elementary proof 
of (\ref{pretty}) would be desirable. \\

The left hand sum admits a 
combinatorial interpretation: multiply by $2^{2m+1}$ to produce 
\begin{equation}
S_{1}(m) := \sum_{j=0}^{m} \binom{2m+1}{2j} \binom{2j}{j} 2^{2m+1-2j}. 
\end{equation}
\noindent 
Consider the set $X$ of all paths in the plane that start at $(0,0)$ and take $2m+1$
steps in any of the four compass directions ($N = (0,1), \, S = (0,-1), \, E = (1,0)$ and $W= (-1,0)$) 
so that the path ends on the $y$-axis. Clearly there must be the same 
number of $E's$ and $W's$, say $j$ of them. Then to produce one of these 
paths, choose which is $E$ and which is $W$ in $\binom{2j}{j}$ ways. 
Finally, choose the remaining $2m+1-2j$ steps to be either $N$ or $S$, in 
$2^{2m+1-2j}$ ways. This shows that the set $X$ has $S_{1}(m)$ elements. 

Now let $Y$ be the set of all paths of the $x$-axis that start and end at $0$, 
take steps $e = 1$ and $w = -1$, and have length $4m+2$. The cardinality of 
$Y$ is clearly $\binom{4m+2}{2m+1}$. There is a simple bijection between the 
sets $X$ and $Y$ given by $E \to ee, \, W \to ww, N \to ew, \, S \to we$. 
Therefore, 
\begin{equation}
S_{1}(m) = \binom{4m+2}{2m+1}.
\end{equation}

We have been unable to produce a combinatorial proof for the right hand side of 
(\ref{pretty}). 

\section{A related family of polynomials} \label{sec-related}
\setcounter{equation}{0}

The expression (\ref{positive}) provides an efficient formula for the 
evaluation of $d_{l,m}$ when $l$ is close to $m$. For example, 
\begin{equation}
d_{m,m} = 2^{-m} \binom{2m}{m}  \text{ and } d_{m-1,m} = (2m+1)2^{-(m+1)} 
\binom{2m}{m}. 
\end{equation}
\noindent
Our attempt to produce 
a similar formula for small $l$ led us into a surprising family of 
polynomials. \\

The original idea is very simple: start with 
\begin{equation}
P_{m}(a) = \frac{2}{\pi} \left[ 2(a+1) \right]^{m + \tfrac{1}{2}} 
\int_{0}^{\infty} \frac{dx}{(x^{4} + 2ax^{2} + 1)^{m+1}}, 
\end{equation}
\noindent
and compute $d_{l,m}$ as coming from the Taylor expansion at $a=0$ of the 
right hand side. This yields 
\begin{equation}
d_{l,m} = \frac{1}{l!m!2^{m+l}} 
\left( \alpha_{l}(m) \prod_{k=1}^{m} (4k-1) - 
\beta_{l}(m) \prod_{k=1}^{m} (4k+1) \right),
\label{dlm-long}
\end{equation}
\noindent
where $\alpha_{l}$ and $\beta_{l}$ are polynomial in $m$ of degrees $l$ and 
$l-1$, respectively. The explicit expressions 
\begin{equation}
\alpha_{l}(m) = \sum_{t=0}^{\lfloor{ l/2 \rfloor} }
\binom{l}{2t} \prod_{\nu=m+1}^{m+t} (4 \nu -1) 
\prod_{\nu=m-l+2t+1}^{m} (2 \nu +1) 
\prod_{\nu=1}^{t-1} (4 \nu +1), 
\label{alpha}
\end{equation}
\noindent
and 
\begin{equation}
\beta_{l}(m) = \sum_{t=1}^{\lfloor{ (l+1)/2 \rfloor} }
\binom{l}{2t-1} \prod_{\nu=m+1}^{m+t-1} (4 \nu +1) 
\prod_{\nu=m-l+2t}^{m} (2 \nu +1) 
\prod_{\nu=1}^{t-1} (4 \nu -1),
\label{beta}
\end{equation}
\noindent
are given in \cite{bomosha}. \\

Trying to obtain more information about 
$\alpha_{l}$ and $\beta_{l}$ directly from (\ref{alpha}, \ref{beta}) proved 
difficult. One uninspired day, we decided to compute their roots
numerically. We were pleasantly surprised to discover the following property.

\begin{theorem}
For all $l \geq 1$, all the roots of $\alpha_{l}(m) = 0$ lie on the line 
$\realpart{m} = - \tfrac{1}{2}$. Similarly, the roots of 
$\beta_{l}(m) = 0$ for $l \geq 2$ lie on the same vertical line.
\end{theorem}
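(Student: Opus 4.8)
The plan is to reduce the claim to a statement about real zeros. For a polynomial with real coefficients, all of its zeros lie on the vertical line $\realpart m = -\tfrac{1}{2}$ exactly when the zero set is invariant under the reflection $m \mapsto -1-m$ in that line, \emph{and} in addition no symmetric pair of zeros has escaped into the complex plane. The invariance is only the easy half, so I would first record the functional equations
\[ \alpha_l(-1-m) = (-1)^l\,\alpha_l(m), \qquad \beta_l(-1-m) = (-1)^{l-1}\,\beta_l(m). \]
These I expect to obtain directly from the product formulas (\ref{alpha}) and (\ref{beta}): the substitution $\nu \mapsto -1-\nu$ sends each factor $2\nu+1$, $4\nu-1$, $4\nu+1$ to the negative of a factor of the same type and permutes the three product ranges among themselves, so that each summand returns up to one overall sign. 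A more structural derivation would read the symmetry off the closed form behind (\ref{dlm-long}), where $\prod_{k=1}^m(4k-1)$ and $\prod_{k=1}^m(4k+1)$ are the Gamma ratios $\Gamma(m+\tfrac34)/\Gamma(\tfrac34)$ and $\Gamma(m+\tfrac54)/\Gamma(\tfrac54)$, and invoke the reflection formula for $\Gamma$.

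Granting the symmetry, substitute $m = -\tfrac12 + it$ with $t$ real. The parity relation together with the reality of the coefficients forces $h_l(t) := i^{-l}\,\alpha_l(-\tfrac12+it)$ to be a \emph{real} polynomial in $t$ of degree $l$, whose leading coefficient is that of $\alpha_l$; and a zero of $\alpha_l$ lies on the line precisely when the corresponding $t$ is real. Thus the theorem for $\alpha_l$ is equivalent to the hyperbolicity of $h_l$, i.e. to $h_l$ having only real zeros, and the same reduction applied to $\beta_l$ produces a real polynomial of degree $l-1$.

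To prove that $h_l$ has only real zeros I would establish a three-term recurrence in the index $l$,
\[ h_{l+1}(t) = (A_l\,t + B_l)\,h_l(t) - C_l\,h_{l-1}(t), \]
and verify that $A_l > 0$ and $C_l > 0$. Favard's theorem then identifies the $h_l$ as a family of orthogonal polynomials on the real line, whose zeros are real, simple, and mutually interlacing; equivalently, one runs the classical induction in which the interlacing of the zeros of $h_{l-1}$ and $h_l$ is propagated to $h_{l+1}$ through the sign changes dictated by the recurrence. The recurrence itself I would extract either from a contiguous relation for the hypergeometric closed form of $d_{l,m}$ or from a generating function $\sum_l \alpha_l(m)\,x^l/l!$, whose shape is suggested by the factors $\binom{l}{2t}$ and $\binom{l}{2t-1}$ appearing in (\ref{alpha}) and (\ref{beta}).

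The main obstacle is exactly this last step. The symmetry is necessary but not sufficient, so the entire content of the theorem sits in the hyperbolicity of $h_l$, and the double-product expressions (\ref{alpha}) and (\ref{beta}) are poorly adapted to exhibiting a clean recurrence with manifestly positive $A_l$ and $C_l$. I therefore anticipate that most of the work lies in converting those formulas into a hypergeometric or generating-function form from which the three-term recurrence and the sign conditions can be read off. A particularly clean finish would be to match $h_l$ with a classical orthogonal family of Meixner--Pollaczek or continuous Hahn type, precisely the polynomials orthogonal on $\mathbb{R}$ that arise from weights built out of such Gamma ratios; the difficulty there is pinning down the parameters so that the match is exact.
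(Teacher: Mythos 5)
Your skeleton matches the paper's proof (due to Little \cite{little}): he too moves the critical line to the imaginary axis, setting $A_{l}(s) = \alpha_{l}((s-1)/2)$ so that $\realpart{m} = -\tfrac{1}{2}$ becomes $\realpart{s} = 0$, he too derives a three-term recurrence in $l$, and he too concludes with an interlacing argument (his ``version of Sturm's theorem''). One repair on your easy half: the summand-by-summand derivation of $\alpha_{l}(-1-m) = (-1)^{l}\alpha_{l}(m)$ from (\ref{alpha}) does not work as stated, because under the reflection the factors $4\nu-1$ become negatives of factors of the \emph{other} type, $-(4\mu+1)$, so the three product ranges are not permuted among themselves. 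The parity itself is true: it follows by induction from the recurrence below (since $A_{0}=1$, $A_{1}=s$, the middle coefficient $2s$ is odd in $s$ and $s^{2}-(2l-1)^{2}$ is even), or from Euler's transformation applied to Little's generating function, which identifies $A_{l}(s)$ as $l!$ times the coefficient of $u^{l}$ in $F(s,u) := (1+2u)^{s/2}\,{}_{2}F_{1}\left(\tfrac{s}{2}+\tfrac{1}{4},\tfrac{1}{4};\tfrac{1}{2};4u^{2}\right)$ and yields $F(s,u) = F(-s,-u)$.

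The genuine gap is the Favard step. The recurrence these polynomials actually satisfy is
\begin{equation*}
x_{l+1}(s) = 2s\,x_{l}(s) - \left(s^{2}-(2l-1)^{2}\right)x_{l-1}(s),
\end{equation*}
so for $h_{l}(t) := i^{-l}A_{l}(it)$ (your $h_{l}$, up to a rescaling of $t$) one gets $h_{l+1}(t) = 2t\,h_{l}(t) - \left(t^{2}+(2l-1)^{2}\right)h_{l-1}(t)$: the subtracted coefficient depends on $t$. No recurrence of your posited form $h_{l+1} = (A_{l}t+B_{l})h_{l} - C_{l}h_{l-1}$ with constants exists: computing $h_{1}=t$, $h_{2}=t^{2}-1$, $h_{3}=t^{3}-11t$, $h_{4}=t^{4}-46t^{2}+25$, the ansatz $h_{4}=(at+b)h_{3}-c\,h_{2}$ forces $c=35$ from the $t^{2}$ coefficient but $c=25$ from the constant term; and since the existence of such a recurrence is invariant under rescaling each $h_{l}$, the family is not an orthogonal polynomial sequence at all. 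Hence Favard's theorem, and any match with Meixner--Pollaczek or continuous Hahn polynomials, is unavailable in principle, not merely hard to parametrize. Your fallback --- running the interlacing induction directly off the sign changes dictated by the recurrence --- is not ``equivalent'' to the Favard route; it is the proof, and it is exactly what Little's Sturm-type theorem does. It survives because $t^{2}+(2l-1)^{2} > 0$ for all real $t$, but it requires bookkeeping absent from the classical case: the subtracted term now has the same degree $l+1$ as $h_{l+1}$, so one must track leading coefficients ($\mathrm{lc}(h_{l+1}) = 2\,\mathrm{lc}(h_{l}) - \mathrm{lc}(h_{l-1}) = 1$ inductively), check the sign of $h_{l+1}$ beyond the extreme zeros of $h_{l}$, and rule out common zeros of consecutive $h$'s (a common zero would propagate down to $h_{0}=1$). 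Reorganized around that induction, your plan closes; as written, its central mechanism fails.
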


The proof of this theorem,  due to J. Little \cite{little}, starts by writing
\begin{equation}
A_{l}(s) := \alpha_{l}( (s-1)/2) \text{ and } 
B_{l}(s) := \beta_{l}( (s-1)/2)
\end{equation}
\noindent
and proving that $A_{l}$ is equal to $l!$ times the coefficient of $u^{l}$ in
$f(s,u) g(s,u)$, where $f(s,u) = (1+ 2u)^{s/2}$ and 
$g(s,u)$ is the hypergeometric series 
\begin{equation}
g(s,u) = {_{2}F_{1}} \left( \frac{s}{2}+ \frac{1}{4}, \frac{1}{4}; \frac{1}{2}; 
4 u^{2} \right).
\end{equation}
\noindent
A similar expression is obtained for $B_{l}(s)$. From here it follows that 
$A_{l}$ and $B_{l}$ each satisfy the  three-term recurrence 
\begin{equation}
x_{l+1}(s) = 2sx_{l}(s) - (s^{2} - (2l-1)^{2})x_{l-1}(s). 
\end{equation}
\noindent
Little then establishes a version of Sturm's theorem to prove the final result.  \\

The location of the zeros of $\alpha_{l}(m)$ now suggest to study the 
behavior of this family as $l \to \infty$. In the best of all worlds, one 
will obtain an analytic function of $m$ with all the zeros on a vertical 
line. Perhaps some Number Theory will enter and ... {\em one never knows}.

\section{Arithmetical properties} \label{sec-arith}
\setcounter{equation}{0}

The expression (\ref{dlm-long}) gives
\begin{equation}
m! 2^{m+1} \, d_{1,m} = (2m+1) \prod_{k=1}^{m} (4k-1) - \prod_{k=1}^{m} (4k+1),
\label{d1}
\end{equation}
\noindent
from where it follows that the right hand side is an even number. This led 
naturally to the problem of determining the $2$-adic valuation of 
\begin{eqnarray}
A_{l,m} := l! m! 2^{m+l} d_{l,m} & = & 
\alpha_{l}(m) \prod_{k=1}^{m} (4k-1) - \beta_{l}(m) \prod_{k=1}^{m} (4k+1) 
\label{new-A} \\
& = & \frac{l! m!}{2^{m-l}} \sum_{k=l}^{m} 2^{k} 
\binom{2m-2k}{m-k} \binom{m+k}{k} \binom{k}{l}.
\label{new1-A}
\end{eqnarray}

Recall that, for $x \in \mathbb{N}$, the $2$-adic valuation $\nu_{2}(x)$ is the
highest power of $2$ that divides $x$. This is extended to $x = a/b \in 
\mathbb{Q}$ via $\nu_{2}(x) = \nu_{2}(a) - \nu_{2}(b)$, leaving $\nu_{2}(0)$
as undefined. It follows from (\ref{new1-A}) that 
\begin{equation}
A_{m,m} = 2^{m} (2m)! \text{ and } A_{m-1,m} = 2^{m-1} (2m-1)! (2m+1),
\end{equation}
\noindent
so these $2$-adic valuations can be computed directly from Legendre's 
classical formula
\begin{equation}
\nu_{2}(x) = x - s_{2}(x),
\end{equation}
\noindent
where $s_{2}(x)$ counts the number of $1$'s in the  binary expansion of $x$. 

At the other end of the $l$-axis, 
\begin{equation}
A_{0,m} = \prod_{k=1}^{m} (4k-1)
\end{equation}
\noindent
is clearly odd, so $\nu_{2}(A_{0,m}) = 0$. The first interesting case is $l=1$:
\begin{equation}
A_{1,m} = (2m+1) \prod_{k=1}^{m} (4k-1) - \prod_{k=1}^{m} (4k+1).
\label{d1-new}
\end{equation}

The main result of \cite{bomosha} is that 
\begin{equation}
\nu_{2}(A_{l,m}) = \nu_{2}(m(m+1)) + 1.
\end{equation}

This was extended in \cite{amm1}. 

\begin{theorem}
\label{2adicall}
The $2$-adic valuation of $A_{l,m}$ satisfies 
\begin{equation}
\nu_{2}(A_{l,m}) = \nu_{2}((m+1-l)_{2l}) + l, 
\label{2valuel}
\end{equation}
\noindent
where $(a)_{k} = a(a+1) \cdots (a+k-1)$ is the Pochhammer symbol for 
$k \geq 1$. For $k=0$, we define $(a)_{0}=1$.
\end{theorem}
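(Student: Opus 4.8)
The plan is to reduce everything to a single three-term recurrence in the index $l$. Writing $s = 2m+1$ in the recurrence $x_{l+1}(s) = 2sx_{l}(s) - (s^{2}-(2l-1)^{2})x_{l-1}(s)$ satisfied by both $A_{l}(s) = \alpha_{l}((s-1)/2)$ and $B_{l}(s) = \beta_{l}((s-1)/2)$, and recalling from (\ref{new-A}) that $A_{l,m} = \alpha_{l}(m)\prod_{k=1}^{m}(4k-1) - \beta_{l}(m)\prod_{k=1}^{m}(4k+1)$ is a fixed ($l$-independent) linear combination of $\alpha_{l}(m)$ and $\beta_{l}(m)$, one sees that $A_{l,m}$ itself obeys
\begin{equation}
A_{l+1,m} = 2(2m+1)A_{l,m} - 4N_{l-1}A_{l-1,m}, \qquad N_{j} := m(m+1) - j(j+1), \nonumber
\end{equation}
since $(2m+1)^{2} - (2l-1)^{2} = 4(m-l+1)(m+l) = 4N_{l-1}$. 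The initial data $\alpha_{0}=1,\ \beta_{0}=0,\ \alpha_{1}(m)=2m+1,\ \beta_{1}(m)=1$ give $A_{0,m} = \prod_{k=1}^{m}(4k-1)$ and the value of $A_{1,m}$ in (\ref{d1-new}). The point of this reformulation is that $N_{l-1} = (m+1-l)(m+l)$ and $N_{l} = (m-l)(m+l+1)$ are exactly the two ``end factors'' of the Pochhammer symbol: since $(m+1-l)_{2l} = (m+1-l)(m+l)\,(m+2-l)_{2l-2}$, the claimed exponent $W_{l} := \nu_{2}((m+1-l)_{2l}) + l$ satisfies the matching recurrence $W_{l} = W_{l-1} + 1 + \nu_{2}(N_{l-1})$, with $W_{0}=0$.

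Next I would set up an induction on $l$ with the strengthened hypothesis that $U_{l} := A_{l,m}/2^{W_{l}}$ is an odd $2$-adic integer (equivalently $\nu_{2}(A_{l,m}) = W_{l}$). The cases $l=0$ (where $A_{0,m}$ is a product of odd numbers) and $l=1$ (the already-known $\nu_{2}(A_{1,m}) = \nu_{2}(m(m+1))+1 = W_{1}$) start the induction. Dividing the recurrence by $2^{W_{l}+1}$ and using $W_{l+1}-W_{l} = 1 + \nu_{2}(N_{l})$ and $W_{l}-W_{l-1} = 1 + \nu_{2}(N_{l-1})$ collapses it to
\begin{equation}
2^{\nu_{2}(N_{l})}\,U_{l+1} = (2m+1)U_{l} - \overline{N}_{l-1}U_{l-1}, \nonumber
\end{equation}
where $\overline{N}_{l-1}$ denotes the odd part of $N_{l-1}$. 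Both $2m+1$ and $\overline{N}_{l-1}$ are odd and, by hypothesis, $U_{l},U_{l-1}$ are odd, so the right-hand side is even; the inductive step is therefore equivalent to the exact cancellation
\begin{equation}
\nu_{2}\!\left((2m+1)U_{l} - \overline{N}_{l-1}U_{l-1}\right) = \nu_{2}(N_{l}). \nonumber
\end{equation}

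The main obstacle is precisely this identity: the two leading terms have the same valuation, so I cannot read the answer off the recurrence and must instead control $U_{l}$ and $U_{l-1}$ modulo $2^{\nu_{2}(N_{l})+1}$, a precision that grows with $\nu_{2}(N_{l})$. To obtain this I would strengthen the induction hypothesis from ``$U_{l}$ odd'' to an explicit congruence for $U_{l}$ modulo a power of $2$ large enough to feed each step. The algebraic input that should make such a congruence self-propagating is the pair of factorizations $(2m+1)^{2} - (2l+1)^{2} = 4N_{l}$ and $(2m+1)^{2}-(2l-1)^{2} = 4N_{l-1}$: the first gives $(2m+1)^{2}\equiv(2l+1)^{2}\pmod{2^{\nu_{2}(N_{l})+2}}$, which lets me trade the $m$-dependence of the odd multiplier $2m+1$ for the $l$-dependent quantity $2l+1$ at the required precision. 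It is convenient to pass to the variables $r = m-l$ and $t = m+l+1$, for which $rt = N_{l}$, $r+t = 2m+1$, and $t-r = 2l+1$; exactly one of $r,t$ is even, so $\nu_{2}(N_{l})$ equals the valuation of that single factor, and the parity bookkeeping in the cancellation reduces to a short case analysis according to which of $r,t$ is even. Verifying that the strengthened congruence survives each application of the recurrence — that is, that the leading $2$-adic digits cancel to exactly order $\nu_{2}(N_{l})$ and no further — is the crux of the argument and the step I expect to demand the most care.
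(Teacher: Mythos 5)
Your algebra up to the crux is sound: with $s=2m+1$, Little's relation does give $A_{l+1,m} = 2(2m+1)A_{l,m} - 4N_{l-1}A_{l-1,m}$ (the coefficients of the linear combination in (\ref{new-A}) are $l$-independent, so the combination inherits the recurrence), and the bookkeeping $W_{l} = W_{l-1} + 1 + \nu_{2}(N_{l-1})$ is correct. But the proposal stops exactly where the content of the theorem lies. The inductive step you need, the \emph{exact} cancellation $\nu_{2}\bigl((2m+1)U_{l} - \overline{N}_{l-1}U_{l-1}\bigr) = \nu_{2}(N_{l})$, is not a routine parity check: since $\nu_{2}(N_{l})$ is unbounded as $m$ varies (take $m-l$ or $m+l+1$ divisible by a high power of $2$), the induction hypothesis would have to control $U_{l}$ and $U_{l-1}$ modulo $2^{\nu_{2}(N_{l})+1}$, a precision depending on $m$ and $l$. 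You acknowledge this and gesture at a ``strengthened congruence'' built from $(2m+1)^{2}\equiv(2l+1)^{2} \pmod{2^{\nu_{2}(N_{l})+2}}$, but no such congruence for $U_{l}$ is ever written down, let alone shown to propagate through the recurrence. As it stands, the key step is a restatement of the theorem rather than a proof of it, and it is genuinely hard in the direction you chose: knowing $U_{l}/U_{l-1}$ to unboundedly many $2$-adic digits is essentially as strong as the theorem itself.

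The missing idea — and the paper's entire proof — is to run your three-term relation \emph{downward} in $l$ instead of upward. Normalize by the full Pochhammer symbol rather than only its $2$-part: with $B_{l,m} := A_{l,m}/\bigl(2^{l}(m+1-l)_{2l}\bigr)$, your recurrence, divided by $2^{l+1}(m+2-l)_{2l-2}\,N_{l-1}$ and solved for the bottom term, becomes exactly the paper's WZ-derived relation $B_{l-1,m} = (2m+1)B_{l,m} - (m-l)(m+l+1)B_{l+1,m}$ for $1 \leq l \leq m-1$; it is the same identity you have, read in the other direction. The decisive difference is that now the even coefficient $N_{l} = (m-l)(m+l+1)$ (even because $(m-l)+(m+l+1) = 2m+1$ is odd, so exactly one factor is even) multiplies the \emph{already known} term $B_{l+1,m}$. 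Hence if $B_{l,m}$ is odd, then $B_{l-1,m} = \text{odd}\cdot\text{odd} - \text{even}$ is odd, with no $2$-adic information beyond parity ever required. The boundary data sit at the top end, where they are explicit: $A_{m,m} = 2^{m}(2m)!$ and $A_{m-1,m} = 2^{m-1}(2m-1)!\,(2m+1)$ give $B_{m,m} = 1$ and $B_{m-1,m} = 2m+1$, both odd, and the downward induction closes in two lines (also making your appeal to the known $l=1$ case (\ref{d1-new}) unnecessary). In short: right recurrence, wrong direction — reversing it converts your unresolved cancellation problem into a trivial parity induction.
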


The proof is an elementary application of the WZ-method. Define the numbers
\begin{eqnarray}
B_{l,m} & := & \frac{A_{l,m}}{2^{l} (m+1-l)_{2l}},
\end{eqnarray}
\noindent
and use the WZ-method to obtain the recurrence 
\begin{equation}
B_{l-1,m}  =  (2m+1)B_{l,m} -(m-l)(m+l+1)B_{l+1,m}, \quad 1 \leq l \leq 
m-1. \nonumber
\end{equation}
\noindent
Since the initial values $B_{m,m} = 1$ and $B_{m-1,m} = 2m+1$ are odd, it 
follows inductively that $B_{l,m}$ is an odd integer. The reader will also 
find in \cite{amm1}
a WZ-free proof of the theorem.  \\

\noindent
{\bf Note}. The reader 
will find in \cite{amm2} a study of the $2$-adic valuation 
of the Stirling numbers. This study was motivated by the results described 
in this section. The papers \cite{cohen1, cohn1, lengyel1, lengyel2, wannemacker1, wannemacker2} contain information
about $2$-adic valuations of related sequences.  \\

\section{The combinatorics of the valuations} \label{sec-combina1}
\setcounter{equation}{0}

The sequence of valuations $\{ \nu_{2}(A_{l,m}): \, m \geq l \}$ increase in
complexity with $l$. Some of the combinatorial nature of this sequence is 
described next. The first feature of this sequence is that it has a block
structure, reminiscent of the simple functions of Real Analysis. 

\begin{definition}
Let $s \in \mathbb{N}, \, s \geq 2$. We say that a sequence 
$\{ a_{j}: \, j \in \mathbb{N} \}$ has {\em block structure} if there is an
$s \in \mathbb{N}$ such that
each $t \in \{ 0, \, 1, \, 2, \, \cdots \}$, we have
\begin{equation}
a_{st + 1} = a_{st + 2} = \cdots = a_{s(t+1)}.
\label{repeat}
\end{equation}
The sequence is called {\it $s$-simple} if $s$ is the largest value for which
(\ref{repeat}) occurs.
\end{definition}

\begin{theorem}
\label{period-thm}
For each $l \geq 1$, the set 
$X(l) := \{ \nu_{2}(A_{l,m}): \, m \geq l \, \}$ is an
$s$-simple sequence, with $s = 2^{1+ \nu_{2}(l)}$. 
\end{theorem}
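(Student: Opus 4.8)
The plan is to reduce the claim to a precise description of the valuation sequence $\nu_2(A_{l,m})$ and then show that this description forces exactly the block structure asserted, with block length $s = 2^{1+\nu_2(l)}$. By Theorem \ref{2adicall} we have $\nu_2(A_{l,m}) = \nu_2((m+1-l)_{2l}) + l$, so the additive constant $l$ is irrelevant to the block structure, and it suffices to study the sequence $m \mapsto \nu_2((m+1-l)_{2l})$ as $m$ ranges over $m \geq l$. Writing out the Pochhammer symbol,
\begin{equation}
(m+1-l)_{2l} = (m+1-l)(m+2-l)\cdots(m+l), \nonumber
\end{equation}
so this is a product of $2l$ consecutive integers centered symmetrically about $m+\tfrac{1}{2}$, and
\begin{equation}
\nu_2((m+1-l)_{2l}) = \sum_{j=1-l}^{l} \nu_2(m+j). \nonumber
\end{equation}
The first step, then, is to understand this sum of $2l$ consecutive $2$-adic valuations as a function of $m$.

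The key structural fact I would exploit is that $\nu_2$ of a window of $2l$ consecutive integers depends only on the residue of $m$ modulo a suitable power of $2$; more precisely, $\sum_{j} \nu_2(m+j)$ is eventually periodic in $m$ in a controlled way once we separate out the contribution of large powers of $2$. For each fixed power $2^i$, exactly one of any $2^i$ consecutive integers is divisible by $2^i$, so among the $2l$ consecutive integers $m+1-l,\dots,m+l$ the count of multiples of $2^i$ is essentially constant, and the \emph{position} of the multiple of the highest relevant power of $2$ inside the window is what governs how the total valuation changes as $m$ increments by $1$. The second step is therefore to prove that the finest scale of variation in this sum is governed exactly by $2^{1+\nu_2(l)}$: when $m$ increases by one, a given integer $m+j$ is replaced at the boundary, and the net change in the sum is determined by which integer enters and which leaves the window, whose parity/divisibility pattern repeats with period tied to $\nu_2(l)$. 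I would make this precise by splitting $\sum_j \nu_2(m+j)$ into a part counting low powers (which is genuinely constant across a block) and a part detecting the unique high-power contribution.

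The third step is to verify the two halves of the $s$-simple claim separately: first that the sequence does satisfy \eqref{repeat} with $s = 2^{1+\nu_2(l)}$, i.e. the values are constant on each block $[st+1, s(t+1)]$, and second that $s$ is the \emph{largest} such value, i.e. the sequence genuinely changes value between consecutive blocks (otherwise a larger $s'$ would work). For the constancy, I expect to show that as $m$ runs through a full block the window $\{m+j\}$ slides by exactly $2^{1+\nu_2(l)}$ positions and the multiset of $2$-adic valuations contributed is preserved, because the arithmetic progressions $m+j \equiv 0 \pmod{2^i}$ realign identically. For maximality, I would exhibit a specific increment of $m$ across a block boundary where the sum strictly changes, by tracking the integer whose valuation is exactly $1+\nu_2(l)$ and showing it cannot be absorbed into a shorter period.

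The main obstacle I anticipate is the bookkeeping of the \emph{carry} behavior: proving constancy on a block requires showing that the gains and losses at the two ends of the sliding window of $2l$ consecutive integers cancel exactly over the block, and this cancellation is delicate precisely because $\nu_2$ is governed by carries in binary addition. The cleanest route is probably to write $m = 2^{1+\nu_2(l)} q + r$ and compute $\sum_{j=1-l}^{l}\nu_2(m+j)$ as a function of $q$ and $r$, showing it is independent of $r$ within a block and strictly monotone-or-changing in $q$ — effectively an explicit Legendre-type evaluation $\nu_2((m+1-l)_{2l}) = \nu_2((m+l)!) - \nu_2((m-l)!) = (2l) - \bigl(s_2(m+l) - s_2(m-l)\bigr)$, after which the block structure follows from analyzing how $s_2(m\pm l)$ behaves under translation by powers of $2$.
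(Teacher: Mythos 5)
Your reduction is the right one, and it is essentially the route the paper points to: Theorem \ref{period-thm} is stated there without an inline proof, but Theorem \ref{2adicall} is exactly the needed input, and your Legendre-type evaluation is correct, giving $\nu_{2}(A_{l,m}) = l + \nu_{2}((m+1-l)_{2l}) = 3l + s_{2}(m-l) - s_{2}(m+l)$. The constancy half then goes through as you sketch, with one alignment caveat: the blocks in (\ref{repeat}) correspond (with $a_{j} = \nu_{2}(A_{l,l+j-1})$) to $m \equiv l, l+1, \dots, l+s-1 \pmod{s}$, so the correct decomposition is $m - l = sq + r$ with $0 \leq r < s$, not $m = sq + r$. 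Writing $2l = s\,l'$ with $l'$ odd (this is precisely where $s = 2^{1+\nu_{2}(l)}$ enters, since $\nu_{2}(2l) = 1 + \nu_{2}(l)$), one gets $s_{2}(m-l) = s_{2}(q) + s_{2}(r)$ and $s_{2}(m+l) = s_{2}(s(q+l')+r) = s_{2}(q+l') + s_{2}(r)$, because $r < s$ forces no carries; hence the value on block $q$ is $g(q) = 3l + s_{2}(q) - s_{2}(q+l')$, independent of $r$. (Your sliding-window version can also be made precise without any carry bookkeeping: a unit step of $m$ inside a block changes the valuation sum by $\nu_{2}(n+2l) - \nu_{2}(n)$ where $n = m+1-l$ is not divisible by $s$, and since $\nu_{2}(2l) = 1+\nu_{2}(l) = \log_{2} s$ this difference vanishes; it can be nonzero only when $n \equiv 0 \pmod{s}$, i.e.\ at block boundaries.)

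The genuine gap is in the maximality half. Exhibiting ``a specific increment of $m$ across a block boundary where the sum strictly changes,'' as you propose, does not prove that $s$ is the largest admissible block length: if the sequence changes between positions $p$ and $p+1$, an $s'$-block structure is excluded only when $s' \nmid p$. For instance, if your exhibited change happened to sit at position $2s$, it would remain perfectly compatible with $s' = 2s$. What must be shown is that the set of change positions --- all multiples of $s$ by the constancy half --- has greatest common divisor exactly $s$. Your explicit block value supplies this: using $s_{2}(q+1) = s_{2}(q) + 1 - \nu_{2}(q+1)$, one computes $g(q+1) - g(q) = \nu_{2}(q+1+l') - \nu_{2}(q+1)$, which is nonzero whenever $q+1$ is odd (then $\nu_{2}(q+1)=0$ while $q+1+l'$ is even, $l'$ being odd). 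So the sequence changes across the boundaries at positions $s \cdot 1$ and $s \cdot 3$, and any $s'$ satisfying (\ref{repeat}) must divide both, forcing $s' \mid \gcd(s,3s) = s$. With this supplement your outline closes into a complete proof; without it, the ``$s$ is largest'' clause of the theorem is not established.
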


We now provide a combinatorial interpretation for $X(l)$. This requires the 
maps 
\begin{eqnarray}
F ( \{ a_{1}, \, a_{2}, \, a_{3}, \, \cdots \} )   & := & 
\{ a_{1}, \, a_{1}, \, a_{2}, \, a_{3}, \, \cdots \}, \nonumber \\
T ( \{ a_{1}, \, a_{2}, \, a_{3}, \, \cdots \} )  & := &   
\{ a_{1}, \, a_{3}, \, a_{5}, \, a_{7}, \, \cdots \}. \nonumber
\end{eqnarray}
\noindent
We will also employ the notation
$c  :=  \{ \nu_{2}(m): \, m \geq 1 \}
= \{ 0, \, 1, \, 0, \, 2, \, 0, \, 1, \cdots \}$.

\medskip

We describe an algorithm that reduces the sequence $X(l)$ 
to a constant sequence.
The algorithm starts with the sequence 
$X(l)  :=  \left\{ \nu_{2}( A_{l,l+m-1}): \quad m \geq 1 \, \right\}$ and 
then finds and $n \in \mathbb{N}$ so that $X(l)$ is 
is $2^{n}$-simple. Define 
$Y(l)  :=  T^{n} \left( X(l) \right)$. At the initial stage, 
Theorem \ref{period-thm}
ensures that $n= 1 + \nu_{2}(l)$. The next step is to introduce the shift
$Z(l) :=   Y(l) - c$ and finally define $W(l):= F(Z(l))$. 
If $W(l)$ is a constant sequence, then STOP; otherwise repeat the 
process with $W$ instead of $X$.  Define $X_k(l)$ as the new 
sequence at the end of the $(k-1)$th cycle of this process, with
$X_1(l)=X(l)$. 

This algorithm produces a sequence of integers $n_{j}$, so that 
$X_{k}(l)$ is $2^{n_{k}}$-simple. The integer vector
$\Omega(l)  :=  \left\{ n_{1}, \, n_{2}, \, n_{3}, \cdots, n_{\omega(l)} 
\right\} $ is called the {\em reduction sequence} of $l$. The number 
$\omega_{l}$ is the number of cycles requires to obtain a constant sequence.

\begin{definition}
\label{def-compo}
Let $l \in \mathbb{N}$. The {\em composition} of $l$, denoted 
by $\Omega_{1}(l)$, is an integer sequence
defined as follows: Write $l$ in binary form. 
Read the digits from right to left. The first part of $\Omega_{1}(l)$ is 
the number of digits up to and including the first $1$ read in the 
corresponding binary sequence; the second one is the number of additional 
digits up to and including the second $1$ read, and so on until the number
has been read completely.
\end{definition}

\begin{theorem}
\label{thm-reduc}
Let $\{ k_{1}, \cdots, k_{n}: \, 0 \leq k_{1} < k_{2} < \cdots < k_{n} \}$, be
the unique collection of distinct nonnegative integers such that
$ l = \sum_{i=1}^{n} 2^{k_{i}}$. Then the 
reduction sequence $\Omega(l)$ 
of $l$ is $\{ k_{1}+1, \, k_{2}-k_{1}, \cdots, 
k_{n}-k_{n-1} \}$. 
\end{theorem}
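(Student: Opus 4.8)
The plan is to prove the result by induction on the number $n$ of nonzero binary digits of $l$, the engine being a \emph{reduction lemma}: one full cycle of the algorithm carries $X(l)$ to the sequence $X(l')$ up to an additive constant, where $l'$ is obtained from $l$ by deleting its lowest set bit $2^{k_{1}}$ and dividing by $2^{k_{1}+1}$, i.e. $l' = \sum_{i=2}^{n} 2^{\,k_{i}-k_{1}-1}$. First I would put $X(l)$ in closed form: by Theorem \ref{2adicall}, after the reindexing $m' = l+m-1$ used to define $X(l)$, one has $X(l)_{m} = l + \nu_{2}\big((m)_{2l}\big)$, so writing $w(a,r) := \nu_{2}\big(a(a+1)\cdots(a+r-1)\big)$ the basic object is $X(l)_{m} = l + w(m,2l)$, the valuation of a product of $2l$ consecutive integers. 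Since $\nu_{2}(l)=k_{1}$, Theorem \ref{period-thm} shows $X(l)$ is $2^{k_{1}+1}$-simple, so the first recorded exponent is $n_{1}=k_{1}+1$, which already matches the first part of $\Omega_{1}(l)$.

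The heart of the argument is tracking the three operations of one cycle on this closed form. The decimation $T^{\,n_{1}}$ selects the entries indexed $2^{k_{1}+1}(j-1)+1$, giving $Y(l)_{j} = l + w(M,2l)$ with $M = 2^{k_{1}+1}(j-1)+1$ odd. The key elementary fact is a \emph{peeling identity}: for odd $P$ and even $r$, only the $r/2$ even factors among $P,\dots,P+r-1$ contribute, each being twice a consecutive integer starting at $(P+1)/2$, so $w(P,r) = r/2 + w\big((P+1)/2,\,r/2\big)$. Applying this to $w(M,2l)$ and iterating $k_{1}$ further times (legitimate because $l = 2^{k_{1}}L$ with $L$ odd) collapses the count from $2l$ down to the odd number $L$ and the argument down to $j$, yielding $Y(l)_{j} = 3l - L + w(j,L)$. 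Peeling off the single leading factor via $w(j,L) = \nu_{2}(j) + w(j+1,L-1)$ shows that the shift $Z(l)=Y(l)-c$ exactly cancels the $\nu_{2}(j)$ term, leaving $Z(l)_{j} = (3l-L) + w(j+1,\,2l')$ with $2l' = L-1$. Finally the front-duplication $F$ re-aligns the index shift: for $j\geq 2$ one gets $W(l)_{j} = (3l-L) + w(j,2l')$ at once, while for $j=1$ the same formula holds because $w(2,2l') = w(1,2l')$ (the extra factor $2l'+1$ is odd). Hence $W(l) = X(l') + C$ with $C = 3l - L - l'$ constant, which is precisely the reduction lemma.

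To close the induction I would observe that all operations of the algorithm—decimation $T$, subtraction of the fixed sequence $c$, front-duplication $F$, and the tests for simplicity and constancy—are equivariant under adding a global constant to a sequence; therefore running the algorithm on $W(l)=X(l')+C$ records exactly the same exponents and the same stopping time as running it on $X(l')$. Consequently $\Omega(l)$ is $k_{1}+1$ followed by $\Omega(l')$, and since the binary digits of $l'$ sit at positions $k_{2}-k_{1}-1 < \cdots < k_{n}-k_{1}-1$, the inductive hypothesis gives $\Omega(l') = \{k_{2}-k_{1},\,k_{3}-k_{2},\dots,k_{n}-k_{n-1}\}$, so concatenation yields the claimed $\Omega(l)$. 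The base case $n=1$, i.e. $l=2^{k_{1}}$, gives $l'=0$ and $X(0)$ the constant zero sequence, so the algorithm halts after one cycle with $\Omega(l)=\{k_{1}+1\}$.

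The main obstacle is the combinatorial bookkeeping inside the peeling step: one must track both the argument and the halving factor count correctly through the $k_{1}+1$ applications of the peeling identity and confirm the accumulated valuation is exactly $3l-L$, then verify that $F$ is precisely the operation undoing the index shift introduced by stripping the leading factor—this is where the parity identity $w(2,2l')=w(1,2l')$ is essential and explains why $F$, rather than some other map, occurs in the algorithm. A secondary point needing care is making the offset-invariance rigorous, so that the constant $C$ accumulated in each cycle provably never affects the recorded exponents $n_{k}$.
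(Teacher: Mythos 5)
Your proposal is correct, and it is worth noting that the paper itself states Theorem \ref{thm-reduc} without any proof --- this section of the survey defers its details to \cite{amm1} --- so there is no in-paper argument to compare against; judged on its own terms, your reduction goes through, and it is in the same spirit as the cited source in that it runs everything through the closed-form valuation of Theorem \ref{2adicall}. Checking the key steps in your notation $w(a,r):=\nu_{2}\bigl(a(a+1)\cdots(a+r-1)\bigr)$: the reindexing $m\mapsto l+m-1$ does give $X(l)_{m}=l+w(m,2l)$; the peeling identity $w(P,r)=r/2+w\bigl((P+1)/2,\,r/2\bigr)$ for $P$ odd and $r$ even is valid, and iterating it a total of $k_{1}+1$ times is legitimate because at step $t$ the argument $2^{k_{1}+2-t}(j-1)+1$ is odd and the count $2^{k_{1}+2-t}L$ is even for $1\leq t\leq k_{1}+1$; the accumulated valuation is $\bigl(2^{k_{1}+1}-1\bigr)L=2l-L$, which together with the initial $l$ yields $Y(l)_{j}=3l-L+w(j,L)$ as you claim (the degenerate case $k_{1}=0$ is consistent, since then $2l-L=l$). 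The cancellation of $c$ against the leading-factor valuation $\nu_{2}(j)$, the identification of $F$ as exactly undoing the index shift via $w(2,2l')=w(1,2l')$ (the stripped factor $2l'+1$ being odd), and the equivariance of $T$, $F$, the subtraction of $c$, and the simplicity and constancy tests under a global additive constant are all sound, so one cycle carries $X(l)$ to $X(l')+C$ with $l'=(L-1)/2$ having bits at positions $k_{i}-k_{1}-1$, and your induction on the number of binary digits closes correctly, including the base case $l=2^{k_{1}}$, where $w(j,0)=0$ makes $W(l)$ constant and the algorithm halts with $\Omega(l)=\{k_{1}+1\}$. The only point you flag as needing care --- rigorizing the offset-invariance --- is indeed routine, since every operation in one cycle commutes with adding a constant; your writeup supplies a complete, self-contained proof of a statement the survey leaves unproved.
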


It follows that the reduction 
sequence $\Omega(l)$ is precisely the sequence of compositions of $l$, that is,
$\Omega(l) = \Omega_{1}(l)$. This is the combinatorial interpretation of the 
algorithm used to reduce $X(l)$ to a constant sequence.

\section{Valuation patterns encoded in binary trees} \label{sec-tree}
\setcounter{equation}{0}

In this section we describe the precise structure of the graph 
of the sequence $\{ \nu_{2}(A_{l,m}), m \geq l \}$. The reader is referred to \cite{moll-sun} for complete 
details. In view of the block structure
described in the previous section, it suffices to consider the sequences
$\{ \nu_{2}(C_{l,m}), m \geq l \}$,  which are defined by $$ C_{l,m} =  $$   
The emerging patterns are still very complicated. For instance, Figure 
\ref{val-c13} shows the case $l=13$ and Figure \ref{val-c59} 
corresponds to $l=59$. The remarkable
fact is that in spite of the complexity of $\nu_{2}(C_{l,m})$
there is {\em an exact formula} for it. The rest of this section describes 
how to find it. 
{{
\begin{figure}[h]
\begin{center}
\centerline{\psfig{file=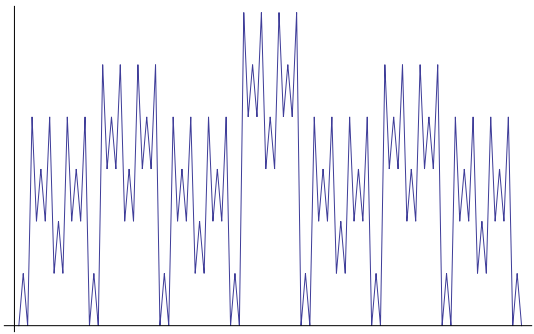,width=20em,angle=0}}
\caption{The valuation $\nu_{2}(C_{13,m})$}
\label{val-c13}
\end{center}
\end{figure}
}}
{{
\begin{figure}[h]
\begin{center}
\centerline{\psfig{file=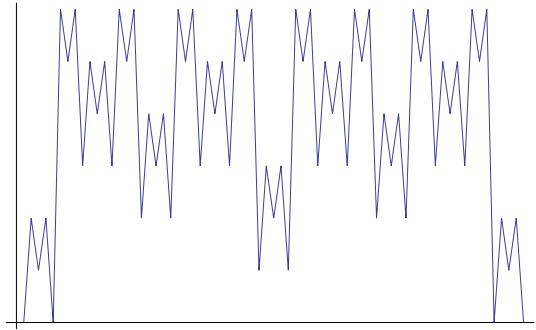,width=20em,angle=0}}
\caption{The valuation $\nu_{2}(C_{59,m})$}
\label{val-c59}
\end{center}
\end{figure}
}}

We describe now the {\em decision tree} associated to the index $l$. 
Start with a root $v_{0}$ at level $k=0$. To 
this vertex we attach the 
sequence $\{ \nu_{2}(C_{l,m}): m \geq 1 \}$ and ask whether 
$\nu_{2}(C_{l,m})-\nu_{2}(m)$ has a constant value 
{\em independent} of $m$. If the 
answer is yes, we 
say that $v_{0}$ is a {\em terminal vertex} and label it with 
this constant. The tree is complete. If the answer is negative, we 
split the integers modulo $2$ and produce two new vertices, $v_{1}, \, v_{2}$,
connected to $v_{0}$ and 
attach to the classes $\{ \nu_{2}(C_{l,2m-1}): m \geq 1 \}$ and 
$\{ \nu_{2}(C_{l,2m}): m \geq 1 \}$ to these vertices. We now ask whether 
$\nu_{2}(C_{l,2m-1})-\nu_{2}(m)$ is independent of $m$ and the same for 
$\nu_{2}(C_{l,2m})-\nu_{2}(m)$.  Each vertex that yields a positive answer is 
considered terminal and the corresponding constant value is attached to it. 
Every vertex with a negative answer produces two new ones at the next level. 

Assume the vertex $v$ corresponding to the sequence 
$\{ 2^{k}(m-1) + a: \, m \geq 1 \}$ produces a negative answer. Then it 
splits in the next generation into two vertices corresponding to the 
sequences $\{ 2^{k+1}(m-1) + a: \, m \geq 1 \}$ and 
$\{ 2^{k+1}(m-1) + 2^{k} + a: \, m \geq 1 \}$. For 
instance, in Figure \ref{tree5}, the
vertex corresponding to $\{ 4m: \, m \geq 1 \}$, that is not terminal, splits 
into $\{ 8m: \, m \geq 1 \}$ and 
$\{ 8m - 4: \, m \geq 1 \}$. These two edges lead to terminal vertices. 
Theorem \ref{formula-val2} shows that this 
process ends in a finite number of steps.  \\

{{
\begin{figure}[ht]
\begin{center}
{\centerline{\psfig{file=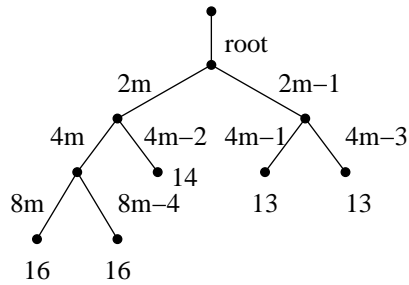,width=15em,angle=0} }}
\caption{The decision tree for $l=5$}
\label{tree5}
\end{center}
\end{figure}
}}

\begin{theorem}
\label{formula-val2}
Let $l \in \mathbb{N}$ and $T(l)$ be its decision tree. Define 
$k^{*}(l) := \lfloor{ \log_{2}l \rfloor}$. Then  \\

\noindent
1) $T(l)$ depends only on the odd part of $l$; that is, for $r \in \mathbb{N}$, 
we have $T(l) = T(2^{r}l)$, up to the labels.  \\

\noindent
2) The generations of the tree are labelled starting at $0$; that is, the root 
is generation $0$. Then, for $0 \leq k \leq k^{*}(l)$, the $k$-th generation 
of $T(l)$ has $2^{k}$ vertices. Up to that point, $T(l)$ is a complete 
binary tree.  \\

\noindent
3) The $k^{*}$-th generation contains $2^{k^{*}+1}-l$ terminal vertices. The 
constants associated with these vertices are given by the following algorithm. 
Define 
\begin{equation}
j_{1}(l,k,a) := -l + 2(1+2^{k}-a), 
\end{equation}
\noindent
and 
\begin{equation}
\gamma_{1}(l,k,a) = l+k+1 + \nu_{2} \left( (j_{1}+l-1)! \right) + 
\nu_{2} \left( (l-j_{1})! \right). 
\end{equation}
\noindent
Then, for $1 \leq a \leq 2^{k^{*}+1}-l$, we have
\begin{equation}
\nu_{2} \left( C_{l,2^{k}(m-1)+a} \right) = \nu_{2}(m) + 
\gamma_{1}(l,k,a). 
\end{equation}
\noindent
Thus, the vertices at the $k^{*}$-th generation have constants given by 
$\gamma_{1}(l,k,a)$.  \\

\noindent
4) The remaining terminal vertices of the tree $T(l)$ appear in the 
next generation. There are $2(l-2^{k^{*}(l)})$ of them.  The constants 
attached to these vertices are defined as follows: let
\begin{equation}
j_{2}(l,k,a) := -l + 2(1+2^{k+1}-a), 
\end{equation}
\noindent
and 
\begin{equation}
j_{3}(l,k,a) := j_{2}(l,k,a+2^{k}). 
\end{equation}
\noindent
Define 
\begin{equation}
\gamma_{2}(l,k,a) := l+k+2 + \nu_{2} \left( (j_{2} + l -1)! \right)
+ \nu_{2} \left( (l- j_{2})! \right), 
\end{equation}
\noindent
and 
\begin{equation}
\gamma_{3}(l,k,a) := l+k+2 + \nu_{2} \left( (j_{3} + l -1)! \right)
+ \nu_{2} \left( (l- j_{3})! \right).
\end{equation}
\noindent
Then, for $2^{k^{*}(l)+1}-l+1 \leq a \leq 2^{k^{*}(l)}$, we have 
\begin{equation}
\nu_{2} \left( C_{l,2^{k^{*}(l) +1}(m-1)+a} \right) = 
\nu_{2}(m) + \gamma_{2}(l,k^{*}(l),a), 
\end{equation}
\noindent
and 
\begin{equation}
\nu_{2} \left( C_{l,2^{k^{*}(l) +1}(m-1)+a+2^{k^{*}(l)} } \right) = 
\nu_{2}(m) + \gamma_{3}(l,k^{*}(l),a), 
\end{equation}
\noindent
give the constants attached to these remaining terminal vertices. 
\end{theorem}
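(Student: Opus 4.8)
\section*{Proof proposal for Theorem \ref{formula-val2}}

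The plan is to reduce everything to the explicit valuation formula of Theorem \ref{2adicall} and then study how the resulting function of the index restricts to dyadic arithmetic progressions. By Theorem \ref{2adicall}, writing $M$ for the second index, $\nu_2(A_{l,M}) = l + \nu_2\big((M+1-l)_{2l}\big) = l + \sum_{n=M-l+1}^{M+l}\nu_2(n)$, so the valuation is a fixed constant plus the $2$-adic valuation of a product of $2l$ consecutive integers sitting in a window that advances with $M$. Since $C_{l,M}$ arises from $A_{l,M}$ by the block reduction of Theorem \ref{period-thm}, $\nu_2(C_{l,M})$ has the same shape: an $l$-dependent constant plus $\sum_{n\in W(M)}\nu_2(n)$ over a window $W(M)$ of consecutive integers. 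I would record this formula first and use it throughout. Part (1) is then essentially immediate, because the block reduction normalises the period $2^{1+\nu_2(l)}$ so that the window $W(M)$, and hence every branching decision in the decision tree, depends on $l$ only through its odd part; replacing $l$ by $2^r l$ merely shifts the additive constants, that is, the vertex labels.

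The core of the argument is a single local statement about $\nu_2$ along a progression. Fix a vertex at generation $k$ corresponding to the index $M = 2^k(m-1)+a$ with running variable $m$. Each window integer $n = 2^k(m-1)+(a+j)$ satisfies one of two alternatives: if $2^k \nmid (a+j)$ then $\nu_2(n)$ equals the constant $\nu_2\big((a+j)\bmod 2^k\big)<k$, independent of $m$; if $2^k \mid (a+j)$ then $\nu_2(n) = k + \nu_2\big(m-1+(a+j)/2^k\big)$ genuinely varies with $m$. Summing, $\nu_2(C_{l,M})-\nu_2(m)$ is independent of $m$ precisely when the window contains exactly one integer divisible by $2^k$ and that integer equals $2^k m$ (equivalently $a+j=2^k$ for the unique such $j$), so that its contribution is exactly $k+\nu_2(m)$ while every other window integer is pinned. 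This is the terminal test. When it holds, the residual constant is a sum of $\nu_2$-values over the remaining window integers, which splits as $\nu_2(p!)+\nu_2(q!)$ for two arguments $p,q$ fixed by the position of the aligned multiple. Legendre's formula $\nu_2(x!)=x-s_2(x)$ then puts this residual constant in closed form, and matching it against the definitions of $j_1,j_2,j_3$ and $\gamma_1,\gamma_2,\gamma_3$ is the bookkeeping that produces the stated labels.

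With the local statement in hand, parts (2)--(4) become a counting problem for the number of multiples of $2^k$ lying in a window whose length is comparable to $l$. For $k<k^*(l)$ the window always contains at least two such multiples, so the terminal test fails at every vertex and each one branches; this forces $T(l)$ to be a complete binary tree through generation $k^*(l)$, giving part (2). At $k=k^*(l)$ the window contains exactly one or two multiples of $2^{k^*}$: the residues $a$ that leave a single, necessarily aligned, multiple are terminal, and a count shows there are exactly $2^{k^*+1}-l$ of them with labels $\gamma_1$, which is part (3). The residues leaving two multiples branch once more; at generation $k^*+1$ each resulting window contains exactly one multiple of $2^{k^*+1}$, so all $2(l-2^{k^*})$ of these vertices are terminal, and evaluating their two residual constants yields $\gamma_2$ and $\gamma_3$, which is part (4). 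Finiteness is automatic, since no vertex survives past generation $k^*+1$.

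The main obstacle I anticipate is the boundary bookkeeping at generations $k^*$ and $k^*+1$. One must determine exactly which residues $a$ leave a single \emph{aligned} multiple (equal to $2^k m$ rather than $2^k(m-1)$), correctly handle the window integers that straddle the ends of a residue block, and verify that the sum of the pinned valuations telescopes into precisely $\nu_2\big((j_i+l-1)!\big)+\nu_2\big((l-j_i)!\big)$. Showing that the two generation counts come out as $2^{k^*+1}-l$ and $2(l-2^{k^*})$, and that alignment holds for exactly the claimed residues, is the delicate combinatorial step; once it is settled, the labels follow from Legendre's formula together with the local statement above.
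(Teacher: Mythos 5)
The paper itself contains no proof of Theorem \ref{formula-val2}: it explicitly defers all details to \cite{moll-sun} (the survey even leaves the defining equation for $C_{l,m}$ blank in Section \ref{sec-tree}), so there is no in-text argument to compare against, and your outline follows precisely the route that the construction and the cited source indicate --- Theorem \ref{2adicall} converts $\nu_{2}(A_{l,m})$ into $l$ plus the $2$-adic valuation of a product of $2l$ consecutive integers, after which the tree is read off from your local lemma on dyadic progressions (pinned residues versus the aligned multiple $a+j=2^{k}$ contributing $k+\nu_{2}(m)$) combined with counting multiples of $2^{k}$ in the window. Your quantitative claims do check out, but note the one place where your phrase ``comparable to $l$'' hides real content: with the raw length-$2l$ window of $A_{l,m}$, generation $k^{*}$ would see two or three multiples of $2^{k^{*}}$ and the counts would be wrong; it is only after the block reduction of Theorem \ref{period-thm} (for $l$ odd, the window in the block variable $m$ becomes exactly the $l$ consecutive integers $m,m+1,\dots,m+l-1$) that $2^{k^{*}}\leq l<2^{k^{*}+1}$ yields one or two multiples, whence exactly $2^{k^{*}+1}-l$ residues $a$ are terminal at generation $k^{*}$ and $2(l-2^{k^{*}})$ at the next, with alignment automatic on the stated ranges of $a$ --- this same reduction, not just ``normalisation of the period,'' is also what part (1) rests on. Granting that step, what remains is exactly the Legendre-formula bookkeeping for $\gamma_{1},\gamma_{2},\gamma_{3}$ that you acknowledge, so I would call your proposal a correct sketch of essentially the intended proof rather than a divergent one.
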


\medskip

We now use the theorem to produce a formula for $\nu_{2}(C_{3,m})$. 
The value $k^{*}(3) = 1$ shows that the first level contains 
$2^{1+1}-3 = 1$ terminal vertex. This corresponds to the sequence 
$2m-1$ and has constant value $7$, thus, 
\begin{equation}
\nu_{2} \left(C_{3,2m-1} \right) = 7. 
\end{equation}
\noindent
The next level has $2(3 - 2^{1}) = 2$ terminal vertices. These correspond to 
the sequences $4m$ and $4m-2$, with constant values $9$ for both of them.
This tree produces 
\begin{equation}
\nu_{2} \left( C_{3,m} \right) = \begin{cases} 
7 + \nu_{2} \left( \tfrac{m+1}{2} \right) & \text{ if }  m \equiv 1 \bmod 2, \\
9 + \nu_{2} \left( \tfrac{m}{4} \right) & \text{ if }  m \equiv 0 \bmod 4, \\
9 + \nu_{2} \left( \tfrac{m+2}{4} \right) & \text{ if }  m \equiv 2 \bmod 4.
\end{cases}
\end{equation}

The complexity of the graph for $l=13$ is reflected in the  analytic formula
for this valuation. The theorem yields

\begin{equation}
\nu_{2} \left( C_{13,m} \right) = \begin{cases} 
36 + \nu_{2} \left( \tfrac{m+7}{8} \right) & \text{ if }  m \equiv 1 \bmod 8, \\
37 + \nu_{2} \left( \tfrac{m+6}{8} \right) & \text{ if }  m \equiv 2 \bmod 8, \\
36 + \nu_{2} \left( \tfrac{m+5}{8} \right) & \text{ if }  m \equiv 3 \bmod 8, \\
40 + \nu_{2} \left( \tfrac{m+12}{16} \right) & \text{ if }  m \equiv 4 \bmod 16, \\
38 + \nu_{2} \left( \tfrac{m+11}{16} \right) & \text{ if }  m \equiv 5 \bmod 16, \\
39 + \nu_{2} \left( \tfrac{m+10}{16} \right) & \text{ if }  m \equiv 6 \bmod 16, \\
38 + \nu_{2} \left( \tfrac{m+9}{16} \right) & \text{ if }  m \equiv 7 \bmod 16, \\
40 + \nu_{2} \left( \tfrac{m+8}{16} \right) & \text{ if }  m \equiv 8 \bmod 16, \\
40 + \nu_{2} \left( \tfrac{m+4}{16} \right) & \text{ if }  m \equiv 12 \bmod 16, \\
38 + \nu_{2} \left( \tfrac{m+3}{16} \right) & \text{ if }  m \equiv 13 \bmod 16, \\
39 + \nu_{2} \left( \tfrac{m+2}{16} \right) & \text{ if }  m \equiv 14 \bmod 16, \\
38 + \nu_{2} \left( \tfrac{m+1}{16} \right) & \text{ if }  m \equiv 15 \bmod 16, \\
40 + \nu_{2} \left( \tfrac{m}{16} \right) & \text{ if }  m \equiv 16 \bmod 16. 
\end{cases}
\end{equation}

The details for Theorem \ref{formula-val2} are given in \cite{moll-sun}. \\

\noindent
{\bf Note}. The $p$-adic valuations of $A_{l,m}$ for $p$ odd present 
phenomena different from those explained for the case $p=2$. Figure \ref{val-17}
shows the plot of $\nu_{17}(A_{1,m})$ where we observe linear growth. 
Experimental data suggest that, for any odd prime $p$, one has 
\begin{equation}
\nu_{p}(A_{l,m}) \sim \frac{m}{p-1}. 
\end{equation}
\noindent
Figure \ref{error-17}
 depicts the error term $\nu_{17}(A_{1,m}) - m/16$. The structure
of the error remains to be explored. 

{{
\begin{figure}[h]
\begin{center}
\centerline{\psfig{file=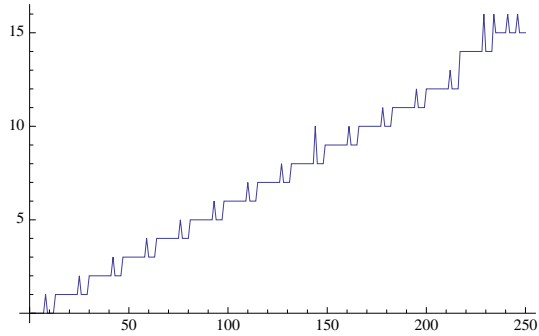,width=20em,angle=0}}
\caption{The valuation $\nu_{17}(A_{1,m})$}
\label{val-17}
\end{center}
\end{figure}
}}

{{
\begin{figure}[h]
\begin{center}
\centerline{\psfig{file=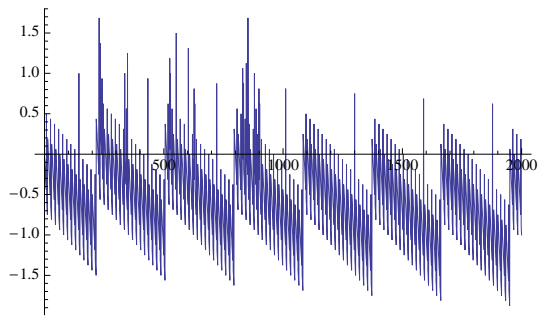,width=20em,angle=0}}
\caption{The error term $\nu_{17}(A_{1,m}) - m/16$}
\label{error-17}
\end{center}
\end{figure}
}}

\section{Unimodality and log-concavity} \label{sec-unimodal}
\setcounter{equation}{0}

A finite sequence of real 
numbers $\{ a_{0}, \, a_{1}, \cdots, a_{m} \}$ is said to 
be {\em unimodal} if there exists an index $0 \leq j \leq m$ such that 
$a_{0} \leq a_{1} \leq \cdots \leq a_{j}$ and 
$a_{j} \geq a_{j+1} \geq \cdots \geq a_{m}$. A polynomial is said to 
be unimodal 
if its sequence of coefficients is unimodal.  The sequence 
$\{a_{0}, a_{1}, \cdots, a_{m} \}$ with $a_{j} \geq 0$ is said
to be {\it logarithmically concave} (or {\em log-concave} for short) if 
$a_{j+1}a_{j-1} \leq a_{j}^{2}$ for $ 1 \leq j \leq m-1$. It is easy to see
that if a sequence is log-concave then it is unimodal \cite{wilf1}. 

Unimodal polynomials arise often in combinatorics, geometry, and algebra, and 
have been the subject of considerable research in recent years. The reader is
referred to \cite{stanley1} and \cite{brenti1} for surveys of the 
diverse techniques employed to 
prove that specific families of polynomials are unimodal. 

For $m \in \mathbb{N}$, the sequence $\{ d_{l,m}: 0 \leq l \leq m \}$ is 
unimodal. This is a consequence of the following criterion established in 
\cite{bomouni1}. 

\begin{theorem}
Let $a_{k}$ be a nondecreasing sequence of positive numbers and let 
$A(x) = \sum_{k=0}^{m} a_{k} x^{k}$. Then $A(x+1)$ is unimodal. 
\end{theorem}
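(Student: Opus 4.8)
The plan is to work directly with the coefficients of $A(x+1)$. Expanding gives $A(x+1)=\sum_{j=0}^{m} b_j x^j$ with $b_j=\sum_{k=j}^{m} a_k \binom{k}{j}$, and since every $a_k>0$ and $\binom{k}{j}>0$ each $b_j$ is positive. Unimodality of $\{b_j\}$ is equivalent to the statement that the difference sequence $c_j:=b_j-b_{j+1}$ changes sign at most once, passing from nonpositive to nonnegative as $j$ increases: if $c_j\le 0$ for $j<t$ and $c_j\ge 0$ for $j\ge t$, then $b_0\le\cdots\le b_t\ge b_{t+1}\ge\cdots$. So the whole problem reduces to controlling the signs of the $c_j$.

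First I would record the exact form of $c_j$. Writing $e_k:=\binom{k}{j}-\binom{k}{j+1}$ we have $c_j=\sum_{k=j}^{m} a_k e_k$, and the elementary equivalence $\binom{k}{j}\ge\binom{k}{j+1}\iff k\le 2j+1$ shows that $e_k\ge 0$ for $k\le 2j+1$ and $e_k\le 0$ for $k\ge 2j+1$; thus $e_k$ is positive and then negative, with a single pivot at $K=2j+1$. The hockey-stick identity also supplies the closed form $\sum_{k=j}^{m} e_k=\binom{m+1}{j+1}-\binom{m+1}{j+2}$ for the total mass.

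The engine of the argument is a rearrangement estimate, and this is exactly where the hypothesis that $\{a_k\}$ is nondecreasing enters. Since $a_k\le a_K$ for $k\le K$ (where $e_k\ge 0$) and $a_k\ge a_K$ for $k\ge K$ (where $e_k\le 0$), every term of $\sum_{k}(a_k-a_K)e_k$ is nonpositive, whence
\[
c_j=\sum_{k=j}^{m} a_k e_k \ \le\ a_{K}\sum_{k=j}^{m} e_k \ =\ a_{2j+1}\left(\binom{m+1}{j+1}-\binom{m+1}{j+2}\right).
\]
A short case analysis then finishes. If $2j+1\ge m$, every $k$ in range satisfies $k\le 2j+1$, so all $e_k\ge 0$ and $c_j\ge 0$ directly. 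If instead $j\le (m-2)/2$, then $2j+1\le m$ (so the pivot lies in range) while $\binom{m+1}{j+1}\le\binom{m+1}{j+2}$, and the displayed bound gives $c_j\le 0$. The two regimes $j\le (m-2)/2$ and $j\ge (m-1)/2$ cover every integer $j$, with the first lying entirely to the left of the second, which is precisely the single sign change that yields unimodality.

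The main obstacle, and the one place where genuine care is required, is the interplay between the rearrangement inequality and the bookkeeping at the threshold $j\approx m/2$: one must verify that the pivot $K=2j+1$ sits correctly inside the summation range $[j,m]$ in the regime where the bound is invoked, and that the two sign regimes abut without a gap (the open interval $\left((m-2)/2,(m-1)/2\right)$ has length $\tfrac12$ and contains no integer, so no value of $j$ is left unresolved). Everything else is the routine hockey-stick bookkeeping and the elementary comparison of binomial coefficients.
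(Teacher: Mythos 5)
Your proof is correct and complete; note, though, that the survey itself states this criterion without proof, citing \cite{bomouni1}, so there is no in-paper argument to diverge from. Your route is essentially the natural (and the original Boros--Moll) one: write $b_j=\sum_{k=j}^{m}a_k\binom{k}{j}$, observe that $e_k=\binom{k}{j}-\binom{k}{j+1}$ changes sign exactly once, at the pivot $k=2j+1$ (where it vanishes, since $\binom{2j+1}{j}=\binom{2j+1}{j+1}$), and play the monotonicity of $\{a_k\}$ against this single sign change. Your pivot comparison $c_j\le a_{2j+1}\sum_{k=j}^{m}e_k$ is a clean one-line substitute for the summation-by-parts argument one might otherwise deploy, and all the delicate bookkeeping checks out: the hockey-stick evaluation $\sum_{k=j}^{m}e_k=\binom{m+1}{j+1}-\binom{m+1}{j+2}$ is right (the $k=j$ term of the second sum vanishes), the inequality $\binom{m+1}{j+1}\le\binom{m+1}{j+2}$ holds precisely when $j\le(m-2)/2$, in that regime $2j+1\le m-1$ so $a_{2j+1}$ is genuinely defined, in the complementary regime $2j+1\ge m$ every $e_k$ with $j\le k\le m$ is nonnegative so $c_j\ge 0$ outright, and the two regimes abut with no integer in the gap. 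As a bonus, your argument localizes the peak at $j=\lfloor m/2\rfloor$, which is the sharper form of the statement established in the cited reference.
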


We applied this theorem to the polynomial 
\begin{equation}
A(x)  := 2^{-2m} \sum_{k=0}^{m} 2^{k} \binom{2m-2k}{m-k} 
\binom{m+k}{m} x^{k}
\label{niceA}
\end{equation}
\noindent
that satisfies  $P_{m}(x) = A(x+1)$.  The criterion was extended in \cite{bomouni2} to include the shifts $A(x+j)$
and in \cite{wang2} for arbitrary shifts. The original proof of the unimodality 
of $P_{m}(a)$ can be found in \cite{bomouni3}.  \\

In \cite{moll-notices} we conjectured the log-concavity of
$\{ d_{l,m}: 0 \leq l \leq m \}$. This turned out 
a more  difficult question. Here we describe some of our failed attempts.  \\

\noindent
1) A result of Brenti \cite{brenti1} states that if 
$A(x)$ is log-concave then so is $A(x+1)$. Unfortunately this does not apply in 
our case since (\ref{niceA}) is not log-concave. Indeed, 
\begin{eqnarray}
2^{4m-2k} \left( a_{k}^{2} - a_{k-1}a_{k+1} \right) & = & 
\binom{2m}{m-k}^{2} \binom{m+k}{m}^{2} \times  \nonumber \\
 & \times & 
\left( 1 - \frac{k(m-k)(2m-2k+1)(m+k+1)}{(k+1)(m+k)(2m-2k-1)(m-k+1)} \right)
\nonumber
\end{eqnarray}
\noindent
and this last factor could be negative---for example, for $m=5$ and $j=4$. 
The number of negative terms in this sequence is small, so perhaps there is 
a way out of this. \\

\noindent
2) The coefficients $d_{l,m}$ satisfy many recurrences. For example,
\begin{equation}
d_{j+1}(m)  =  \frac{2m+1}{j+1} d_{j}(m) - 
\frac{(m+j)(m+1-j)}{j(j+1)} d_{j-1}(m). 
\end{equation}
This can be found by a direct application of WZ method. Therefore, $d_{l,m}$ 
is logconcave provided 
\begin{equation}
j(2m+1) d_{j-1}(m) d_{j}(m) \leq  (m+j)(m+1-j) d_{j-1}(m)^{2} + 
j(j+1) d_{j}(m)^{2}.
\end{equation}
\noindent
We have conjectured that the smallest value of the expression 
\begin{equation}
(m+j)(m+1-j)d_{j-1}(m)^{2} + j(j+1)d_{j}(m)^{2} - j(2m+1) d_{j-1}(m)d_{j}(m)
\end{equation}
\noindent
is $2^{2m} m(m+1) \binom{2m}{m}^{2}$ and it occurs at $j=m$. This would imply
the log-concavity of $\{ d_{l,m} : 0 \leq l \leq m \}$.  Unfortunately, it 
has not yet been proven. \\

Actually we have conjectured that the $d_{l,m}$ satisfy a stronger version of log-concavity. Given 
a sequence $ \{ a_{j} \}$  of positive numbers, define a map 
\begin{equation}
\mathfrak{L} \left( \{ a_{j} \} \right)  :=  \{ b_{j} \} \nn
\end{equation}
\no
by $b_{j}  :=   a_{j}^{2} - a_{j-1}a_{j+1}$. Thus 
$\{ a_{j} \}$ is log-concave if  $\{ b_{j} \}$  has 
positive coefficients.  The nonnegative sequence $\{ a_{j} \}$ is 
called 
{\em infinitely log-concave} if any number of applications of $\mathfrak{L}$ 
produces a nonnegative sequence.  \\

\begin{conjecture}
\label{conj-inf}
For each fixed $m \in \mathbb{N}$, the 
sequence $\{ d_{l,m} : 0 \leq l \leq m \}$ is infinitely log-concave.
\end{conjecture}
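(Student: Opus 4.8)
The plan is to realize infinite log-concavity as an invariance property of the operator $\mathfrak{L}$, rather than trying to iterate it by hand. Call a positive sequence $\{a_j\}$ \emph{$r$-log-concave}, written $\{a_j\} \in \mathcal{C}_r$, if $a_j^2 \geq r\,a_{j-1}a_{j+1}$ for every interior index $j$ (with the convention $a_{-1}=a_{m+1}=0$, so the two extreme inequalities hold trivially). The first step is to prove the \emph{preservation lemma}: with the critical constant $r=\tfrac{3+\sqrt{5}}{2}$, the cone $\mathcal{C}_r$ is stable under $\mathfrak{L}$; that is, if $\{a_j\}\in\mathcal{C}_r$ then $\{b_j\}=\mathfrak{L}(\{a_j\})$ satisfies $b_j>0$ and $\{b_j\}\in\mathcal{C}_r$. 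This is a purely local statement: after substituting $b_j=a_j^2-a_{j-1}a_{j+1}$, the inequality $b_j^2\geq r\,b_{j-1}b_{j+1}$ becomes a fixed polynomial inequality in the five consecutive entries $a_{j-2},\dots,a_{j+2}$, and $r=\tfrac{3+\sqrt{5}}{2}$ is exactly the threshold at which the relevant discriminant turns nonnegative. Once this lemma is in hand, any sequence lying in $\mathcal{C}_r$ is automatically infinitely log-concave, because every iterate $\mathfrak{L}^k$ stays in $\mathcal{C}_r$ and hence has positive entries.

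Second, I would try to place $\{d_{l,m}\}$ inside $\mathcal{C}_r$. Here the naive hope fails: numerically the Tur\'an ratios $d_{l-1,m}d_{l+1,m}/d_{l,m}^2$ exceed $1/r=\tfrac{3-\sqrt{5}}{2}\approx 0.382$ already for moderate $m$ (for $m=4$ one finds $d_{0,4}d_{2,4}/d_{1,4}^2\approx 0.404$, and for $m=5$ the interior ratios climb past $0.52$), so the raw sequence is not $r$-log-concave. The companion idea of deducing infinite log-concavity from real-rootedness of $P_m$ is also unavailable, since $P_m$ has non-real zeros: already the discriminant of $P_2(a)=\tfrac{21}{8}+\tfrac{15}{4}a+\tfrac{3}{2}a^2$ is negative. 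What the data do suggest, however, is that a single pass of $\mathfrak{L}$ is strongly smoothing: for $m=4$ and $m=5$ the transformed sequence $\mathfrak{L}(\{d_{l,m}\})$ has all of its Tur\'an ratios at most about $0.33$, i.e. it lands comfortably inside $\mathcal{C}_r$. This motivates the \emph{finite-delay} strategy: prove that there is a fixed $j_0$ (the evidence points to $j_0=1$ for small $m$) with $\mathfrak{L}^{j_0}(\{d_{l,m}\})\in\mathcal{C}_r$ for all $m$, and then apply the preservation lemma to the tail.

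The main obstacle is this delayed-entry step. Writing $b_l=d_{l,m}^2-d_{l-1,m}d_{l+1,m}$, the requirement $b_l^2\geq r\,b_{l-1}b_{l+1}$ is a polynomial inequality of total degree eight in the coefficients $d_{l,m}$, hence an inequality in the two integer parameters $l$ and $m$ that is no longer hypergeometric: squaring $\mathfrak{L}$ destroys the holonomic structure that made the single- and triple-sum identities accessible to the WZ method. I would attack it by inserting the explicit formula (\ref{positive}) and seeking a lower bound for $b_l$ together with a matching upper bound for $b_{l-1}b_{l+1}$, uniformly in $m$; the unimodality of the row $\{d_{l,m}\}$ and the explicit boundary values $d_{m,m}=2^{-m}\binom{2m}{m}$ and $d_{m-1,m}=(2m+1)2^{-(m+1)}\binom{2m}{m}$ should help control the dangerous near-boundary indices, where the defects $b_l$ are smallest, although a quantitative handle on the ratios $d_{l,m}/d_{l-1,m}$ would itself have to be established. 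The real risk---and the reason this remains a conjecture---is uniformity in $m$: one must rule out the possibility that, as $m\to\infty$, the transformed ratios creep back toward the threshold $1/r$ and eventually surpass it, which would force $j_0$ to grow with $m$ and break the finite-delay reduction.
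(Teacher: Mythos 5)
You have correctly identified the situation: the statement you were given is Conjecture \ref{conj-inf}, and the paper contains no proof of it --- it explicitly records that the infinite log-concavity of $\{d_{l,m}\}$ ``has resisted all our efforts'' and that even $2$-log-concavity is open, with Kauers and Paule expressing ``little hope'' that their automated method extends to it. So there is no proof in the paper to compare against, and your proposal, honestly, does not claim to be one. Its sound parts are genuinely sound: the preservation lemma is exactly the McNamara--Sagan mechanism cited in the paper, and your constant is right --- from $b_k \geq (1-1/r)a_k^2$ and $b_{k\pm 1} \leq a_{k\pm 1}^2 $ together with $a_{k-1}a_{k+1} \leq a_k^2/r$ one needs $(r-1)^2 \geq r$, whose threshold is $r = (3+\sqrt{5})/2$. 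Your numerics also check out: with $256\,d_{l,4} = (2310, 7080, 8760, 5040, 1120)$ one gets $d_{0,4}d_{2,4}/d_{1,4}^2 = 2310\cdot 8760/7080^2 \approx 0.404 > (3-\sqrt{5})/2$, so the raw rows indeed fail to lie in $\mathcal{C}_r$, and the $\mathfrak{L}$-image ratios for $m=4,5$ are at most about $0.33$ as you state; likewise $P_2$ has discriminant $(15/4)^2 - 4\cdot\tfrac{3}{2}\cdot\tfrac{21}{8} = -27/16 < 0$, consistent with \cite{bomouni3}, so Newton's theorem is unavailable.

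The genuine gap is the finite-delay step, and it is not a technicality but the entire content of the conjecture. You offer no argument --- beyond two data points --- that $\mathfrak{L}(\{d_{l,m}\}) \in \mathcal{C}_r$ uniformly in $m$, nor that any fixed iterate $j_0$ works for all $m$. Note the severity of what that step demands: membership of the first image in $\mathcal{C}_r$ with $r > 1$ already implies $2$-log-concavity, the precise statement the paper reports as open; indeed, even the \emph{positivity} of the entries $b_l = d_{l,m}^2 - d_{l-1,m}d_{l+1,m}$ (ordinary log-concavity) required the Kauers--Paule computer proof via the recurrences (\ref{recu1}) and the quadratic inequality displayed in Section \ref{sec-unimodal}. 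Your own diagnosis explains why the natural tools fail: the inequality $b_l^2 \geq r\,b_{l-1}b_{l+1}$ is degree eight in the $d_{l,m}$ and no longer holonomic, so the WZ-style machinery that proves (\ref{recu1}) and the single-sum formula (\ref{positive}) does not apply, and nothing in the proposal substitutes for it --- the suggested lower bound on $b_l$ and upper bound on $b_{l-1}b_{l+1}$ ``uniformly in $m$'' is stated as a hope, with the near-boundary indices and the possible drift of the transformed ratios toward $1/r$ as $m \to \infty$ left entirely uncontrolled. In short: your proposal is a reasonable research program --- essentially the McNamara--Sagan $r$-factor program the paper itself surveys --- but it proves nothing beyond what was already known, and Conjecture \ref{conj-inf} remains open on your account exactly where it remains open in the literature.
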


\medskip

The log-concavity of  $\{ d_{l,m} : 0 \leq l \leq m \}$ has recently been established by M. Kauers 
and P. Paule \cite{kauers-paule} as an applications of their work on 
establishing inequalities by automatic means. The starting point is the
triple sum expression in Section \ref{sec-triple} written as
\begin{equation}
d_{l,m} = \sum_{j,s,k} \frac{(-1)^{k+j-l}}{2^{3(k+s)}} 
\binom{2m+1}{2s} \binom{m-s}{k} \binom{2(k+s)}{k+s} \binom{s}{j} 
\binom{k}{l-j}. \nonumber
\end{equation}
\noindent
Using the RISC package Multisum \cite{wegschaider1} they derive the 
recurrence
\begin{equation}
2(m+1)d_{l,m+1} = 2(l+m)d_{l-1,m} + (2l+4m+3)d_{l,m}. 
\label{recu1}
\end{equation}
\noindent
The positivity of $d_{l,m}$ follows directly from here. To establish the 
log-concavity of $d_{l,m}$ the new recurrence
\begin{equation}
4l(l+1)d_{l+1,m} = 
-2(2l-4m-3)(l+m+1)d_{l,m} + 4(l-m-1)(m+1)d_{l,m+1}
\nonumber 
\end{equation}
\noindent
is derived automatically and the log-concavity of $d_{l,m}$ is reduced to 
establishing the inequality
\begin{eqnarray} 
d_{l,m}^{2} & \geq & 
\frac{4(m+1) \left( 4((l-m-1)(m+1)-(2l^{2}-4m^{2}-7m-3)d_{l,m+1}d_{l,m} \right)}
{16m^{3}+16lm^{2}+40m^{2}+28lm + 33m +9l+9}
\nonumber
\end{eqnarray}
\noindent
The $2$-log-concavity of  $\{ d_{l,m} : 0 \leq l \leq m \}$, that is ${\mathfrak{L}}^{(2)}( \{ d_{l,m}\})
\geq \{0,0, \dots 0\}$ remains an open question. At the end of \cite{kauers-paule} the 
authors state that
 ``...we have little hope that a proof of $2$-logconcavity could be completed
along these lines, not to mention that a human reader would have a hard time 
digesting it." \\

The general concept of infinite log-concavity has generated some interest. D. Uminsky 
and K. Yeats \cite{umi-yeats} have studied the action of  $\mathfrak{L}$ on 
sequences of the form 
\begin{equation}
\{\cdots, 0, 0, 1, x_{0}, x_{1}, \cdots, x_{n}, 
\cdots, x_{1}, x_{0}, 1,0,0, \cdots \}
\end{equation}
\noindent
and 
\begin{equation}
\{\cdots, 0, 0, 1, x_{0}, x_{1}, \cdots, x_{n}, x_{n}, 
\cdots, x_{1}, x_{0}, 1,0,0, \cdots \}
\end{equation}
\noindent
and established the existence of a large unbounded region in the positive 
orthant of $\mathbb{R}^{n}$ that consists only of infinitely log-concave
sequences $\{x_0, \dots, x_n\}$. P. McNamara and B. Sagan \cite{mcnamara1} have considered 
sequences satisfying the condition $a_{k}^{2} \geq r a_{k-1}a_{k+1}$. Clearly
this implies log-concavity of $r \geq 1$. Their techniques apply to the 
rows of the Pascal triangle. Choosing appropriate 
$r$-factors and a computer verification procedure, they obtain the following.

\begin{theorem}
For fixed $n \leq 1450$, the sequence $ \{ \binom{n}{k}: 0 \leq k \leq n\}$ is 
infinite log-concave. 
\end{theorem}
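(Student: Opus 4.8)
The plan is to work with a stronger notion than log-concavity, namely \emph{$r$-factor log-concavity}: a positive sequence $\{a_k\}$ has this property if $a_k^2 \geq r\, a_{k-1} a_{k+1}$ for every $k$. For $r \geq 1$ this implies ordinary log-concavity, so that $\mathfrak{L}(\{a_k\})$ is automatically nonnegative. The heart of the matter is a \emph{preservation lemma}: there is a threshold $r^{\ast} = \tfrac{3+\sqrt{5}}{2}$, the square of the golden ratio, such that $r^{\ast}$-factor log-concavity is inherited by $\mathfrak{L}(\{a_k\})$. Granting this, any positive $r^{\ast}$-factor log-concave sequence is \emph{infinitely} log-concave, since the defining inequality then reproduces itself under every application of $\mathfrak{L}$, while $r^{\ast} > 1$ keeps each iterate nonnegative.

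To establish the preservation lemma I would localize it to five consecutive terms. Normalizing $a_{k-2} = 1$ and writing $a_{k-1} = t_1$, $a_k = t_1 t_2$, $a_{k+1} = t_1 t_2 t_3$, $a_{k+2} = t_1 t_2 t_3 t_4$, the three hypotheses read $t_1 \geq r t_2$, $t_2 \geq r t_3$, $t_3 \geq r t_4$, while $b_{k-1} = t_1(t_1 - t_2)$, $b_k = t_1^2 t_2(t_2 - t_3)$ and $b_{k+1} = t_1^2 t_2^2 t_3(t_3 - t_4)$. The target inequality $b_k^2 \geq r\, b_{k-1} b_{k+1}$ then simplifies, after cancellation, to
\[
t_1 (t_2 - t_3)^2 \geq r\, t_3 (t_1 - t_2)(t_3 - t_4).
\]
The extremal configuration is $t_4 \to 0$ and $t_1 \to \infty$, where this collapses to $(t_2 - t_3)^2 \geq r\, t_3^2$; combined with the available bound $t_2 \geq r t_3$ this holds precisely when $(r-1)^2 \geq r$, i.e. $r \geq r^{\ast}$, and one checks that $r \geq r^{\ast}$ suffices in all other configurations as well.

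It remains to bring the binomial rows into this regime. For $a_k = \binom{n}{k}$ one has $a_k^2/(a_{k-1} a_{k+1}) = (k+1)(n-k+1)/[k(n-k)]$, whose minimum over $k$ sits near the center and equals only about $1 + 4/n$, far below $r^{\ast}$ for large $n$. However, a direct computation gives $\mathfrak{L}(\{\binom{n}{k}\})_k = \binom{n}{k}^2 (n+1)/[(k+1)(n-k+1)]$, and the ratio of this new sequence works out to $(k+2)(n-k+2)/[k(n-k)]$, so that the central excess over $1$ roughly \emph{doubles} under one application. Iterating, one expects that after $O(\log n)$ steps the sequence satisfies the $r^{\ast}$-factor inequality at every index, whence infinite log-concavity follows once the finitely many intermediate sequences are checked to be nonnegative.

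The main obstacle lies in this last step: after the first application the sequence loses the clean closed form that made the ratio transparent, so there is no tidy induction carrying the ratio bound through all iterations. One is therefore forced to verify \emph{computationally}, for each $n$ separately, that the $r^{\ast}$-factor regime is reached. Because every application of $\mathfrak{L}$ squares the entries, the integers grow doubly exponentially, and it is this growth---rather than any conceptual gap---that limits the verified range to $n \leq 1450$.
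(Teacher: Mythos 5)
Your proposal follows essentially the same route as the paper, which attributes this theorem to McNamara and Sagan and describes precisely your strategy: identify $r = \tfrac{3+\sqrt{5}}{2}$ (the square of the golden mean) as the threshold for which the $r$-factor condition $a_k^2 \geq r\, a_{k-1}a_{k+1}$ is preserved by $\mathfrak{L}$, and then verify by computer, for each $n \leq 1450$, that the rows of Pascal's triangle (after finitely many applications of $\mathfrak{L}$, with nonnegativity of the intermediates checked) enter this self-sustaining regime. Your normalization argument for the preservation lemma is correct---the worst case $t_1 \to \infty$, $t_4 \to 0$, $t_2 = r t_3$ does reduce the requirement to $(r-1)^2 \geq r$---so the proposal is sound and matches the paper's approach.
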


\noindent
In particular, they looked for values of $r$ for which the $r$-factor 
condition is preserved
by the $\mathfrak{L}$-operator.  The factor that works is 
$r = \frac{3 + \sqrt{5}}{2}$ (the square of the golden mean).  This 
technique can be used on a variety of finite sequences.  
See \cite{mcnamara1} for a complete discussion of the techinque.

McNamara and Sagan have also considered 
$q$-analogues of the binomial coefficients. 
In order to describe these extensions we introduce the basic notation 
and refer to the reader to \cite{kac-chung} and \cite{andrews3} for more
details on the world of $q$-analogues. 

Let $q$ be a variable and for $n \in \mathbb{N}$ define 
\begin{equation}
[n]_{q} = \frac{1-q^{n}}{1-q} = 1 + q + q^{2} + \cdots + q^{n-1}.
\end{equation}
\noindent
The {\em Gaussian-polynomial} or {\em $q$-binomial coefficients} are defined
by 
\begin{equation} 
\begin{bmatrix} n \\ k \end{bmatrix}_{q}   = 
\frac{ [n]_{q}!}{[k]_{q}! \, [n-k]_{q}!},
\end{equation}
\noindent
where $[n]_{q}! := [1]_{q} [2]_{q} \cdots [n]_{q}$. The Gaussian polynomials 
have nonnegative coefficients. We will say that the sequence of polynomials 
$ \{ f_{k}(q) \}$ is {\it $q$-log-concave} if $\mathfrak{L}( f_{k}(q) )$ is a 
sequence of polynomials with nonnegative coefficients. The extension of
this definition to {\it infinite q-log-concavity} is made in the obvious way.

Observe that 
\begin{equation}
\binom{n}{k} = \lim\limits_{q \to 1} 
\begin{bmatrix} n \\ k \end{bmatrix}_{q}. 
\end{equation}
\noindent
McNamara and Sagan have established the surprising result:

\begin{theorem}
The sequence 
$ {\left \{ \begin{bmatrix} n \\ k \end{bmatrix}_{q} \right\}}_{k \geq 0}$ is
not infinite q-logconcave. 
\end{theorem}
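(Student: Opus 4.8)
The plan is to disprove infinite $q$-log-concavity by producing a single explicit counterexample; since the assertion to be refuted is that \emph{every} iterate of $\mathfrak{L}$ stays nonnegative, one failing instance suffices. The natural place to look is the smallest nontrivial row. For $n=0$ and $n=1$ the sequences $\{1\}$ and $\{1,1\}$ are fixed by $\mathfrak{L}$ (using the convention that entries outside $0 \le k \le n$ are $0$), so these rows are trivially infinitely $q$-log-concave. I would therefore begin with $n=2$, whose row is $\{1,\,1+q,\,1\}$, and track the coefficients through repeated applications of $\mathfrak{L}$. A useful preliminary remark is that the first application cannot be the source of any failure: the classical $q$-log-concavity of the Gaussian polynomials guarantees that $\mathfrak{L}$ applied once to a row already has nonnegative coefficients, so any counterexample must surface at the second iterate or later. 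This focuses attention precisely where the computation below lands.

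Carrying this out, the first application sends the middle entry to $(1+q)^2 - 1\cdot 1 = 2q+q^2$ and fixes the flanking $1$'s, giving $\mathfrak{L}(\{1,1+q,1\}) = \{1,\,2q+q^2,\,1\}$, which has nonnegative coefficients as expected. The second application produces the new middle entry
\[
(2q+q^2)^2 - 1\cdot 1 = q^4 + 4q^3 + 4q^2 - 1,
\]
whose constant term is $-1$. Thus $\mathfrak{L}^{2}$ applied to the $n=2$ row is not a sequence of polynomials with nonnegative coefficients, so this row---and hence the family $\left\{ \begin{bmatrix} n \\ k \end{bmatrix}_{q} \right\}$---is not infinitely $q$-log-concave, which is the theorem.

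The part deserving emphasis is conceptual rather than computational. Evaluating at $q=1$ turns the iterates into $1,2,1 \mapsto 1,3,1 \mapsto 1,8,1$, which are numerically log-concave, so nothing is visible at the level of values; the failure is a pure coefficient phenomenon, and this is exactly what makes the result surprising in light of the positive result for ordinary binomial rows. The mechanism is worth isolating, since it also explains why larger rows fail and why no delicate search is needed: each pass of $\mathfrak{L}$ raises the order of the lowest-degree monomial of an interior entry (here $2q+q^2$ is divisible by $q$, so its square is divisible by $q^2$), while the boundary product $b_{j-1}b_{j+1}$ keeps its constant term $1$; the subtraction then drives the constant coefficient negative. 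I would close by noting that $n=2$ is the minimal counterexample, so the explicit two-step computation above is already a complete proof.
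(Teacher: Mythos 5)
Your counterexample is correct, and it is worth noting that the paper itself supplies \emph{no} proof of this theorem: it is quoted as a result of McNamara and Sagan \cite{mcnamara1}, with only the remark that ``applying $\mathfrak{L}$ twice gives polynomials with some negative coefficients.'' Your argument lands exactly on that remark and makes it explicit. With the zero-padding convention (the same one implicit in the paper's discussion of the Uminsky--Yeats sequences), the $n=2$ row gives $\mathfrak{L}(\{1,\,1+q,\,1\}) = \{1,\,2q+q^{2},\,1\}$, and the second application produces the middle entry $(2q+q^{2})^{2}-1 = q^{4}+4q^{3}+4q^{2}-1$, which has a negative constant term; since the rows $n=0,1$ are fixed by $\mathfrak{L}$, this is indeed the minimal failing row, and one failing row refutes infinite $q$-log-concavity as stated. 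Your observation that the failure is invisible at $q=1$ (where the iterates $1,2,1 \mapsto 1,3,1 \mapsto 1,8,1$ remain log-concave) is a genuinely useful addition, since it explains why the result is ``surprising'' in the paper's sense.

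One caveat: your closing mechanism paragraph overreaches when it claims the same constant-term argument ``explains why larger rows fail.'' For $n \geq 3$, after one application of $\mathfrak{L}$ the entry $b_{2} = a_{2}^{2} - a_{1}a_{3}$ also has constant term $1 - 1 = 0$, so the second-iterate entry $b_{1}^{2} - b_{0}b_{2}$ has constant term $0 - 0 = 0$: the product $b_{0}b_{2}$ does \emph{not} keep constant term $1$ once both neighbors of an interior entry are themselves interior. The honest generalization is a lowest-degree comparison rather than a constant-term one --- squaring $b_{1}$ doubles its order of vanishing at $q=0$, while $b_{0}b_{2} = b_{2}$ retains the low-order terms of $b_{2}$, and one must then check (as happens, e.g., at the $q$-coefficient for $n=3$ and the $q^{2}$-coefficient for $n=4$) that the surviving low-order coefficient of $b_{2}$ dominates. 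This does not affect the validity of your proof of the theorem as stated, but if one reads the theorem as asserting failure for \emph{every} $n \geq 2$, as McNamara and Sagan's work suggests, your argument covers only $n=2$ and the heuristic would need this repair.
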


In fact they established that applying $\mathfrak{L}$ twice gives polyomials
with some negative coefficients. As a compensation, they propose: 

\begin{conjecture}
The sequence 
$ \left \{ \begin{bmatrix} n \\ k \end{bmatrix}_{q} \right\}_{n \geq k}$ is
infinite q-log-concave for all fixed $k \geq 0$. 
\end{conjecture}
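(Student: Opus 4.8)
The plan is to combine explicit base cases with an invariant-inequality (``$r$-factor'') argument adapted to the coefficientwise setting. First I would dispose of the trivial columns. For $k=0$ the sequence is $\{1,1,1,\dots\}$, which $\mathfrak{L}$ sends to the zero sequence, so it is infinitely $q$-log-concave. For $k=1$ the column is $\{[n]_q\}_{n\ge 1}$, and a one-line computation using $[n]_q=(1-q^{n})/(1-q)$ gives
\begin{equation}
[n]_q^{2}-[n-1]_q[n+1]_q=q^{\,n-1},
\end{equation}
so $\mathfrak{L}$ carries $\{[n]_q\}$ to the pure geometric sequence $\{q^{\,n-1}\}$, which is annihilated by the next application of $\mathfrak{L}$. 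Thus the $k=1$ column collapses to a finitely supported nonnegative sequence after two steps. These cases both settle the conjecture for small $k$ and reveal the mechanism I would try to exploit: iterating $\mathfrak{L}$ drives the sequence toward geometric behaviour, and geometric sequences lie in the kernel of $\mathfrak{L}$.

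To organize the general case I would pass to the generating function
\begin{equation}
\sum_{n\ge 0}\begin{bmatrix} n+k \\ k \end{bmatrix}_q x^{n}=\prod_{i=0}^{k}\frac{1}{1-q^{i}x},
\end{equation}
whose partial-fraction expansion writes the column as a sum of geometric sequences $\begin{bmatrix} n+k \\ k\end{bmatrix}_q=\sum_{i=0}^{k}A_i(q)\,q^{in}$ with ratios in the geometric progression $q^{0},q^{1},\dots,q^{k}$. On such sums $\mathfrak{L}$ acts transparently: a single geometric lies in $\ker\mathfrak{L}$, a sum of two geometrics $ar^{n}+bs^{n}$ is sent to $-ab(r-s)^{2}(rs)^{n-1}$, and in general
\begin{equation}
\mathfrak{L}\Big(\sum_i A_i q^{in}\Big)_n=-\sum_{i<j}A_iA_j\,q^{-(j-i)}\,(1-q^{\,j-i})^{2}\,q^{(i+j)n}.
\end{equation}
The ratios produced are $q^{s}$ with $1\le s\le 2k-1$, so the spectrum grows under iteration and there is no finite-dimensional $\mathfrak{L}$-invariant subspace once $k\ge 2$; the finite collapse seen for $k\le 1$ does not persist. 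This computation is useful as bookkeeping, but it also warns that coefficientwise positivity of the iterates is a global cancellation phenomenon rather than a term-by-term one, since each factor $(1-q^{\,j-i})^{2}=1-2q^{\,j-i}+q^{2(j-i)}$ already carries a negative coefficient.

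The main engine I would build is a $q$-analogue of the McNamara--Sagan method. The aim is to find a fixed series $\rho(q)$ with $\rho(q)-1$ having nonnegative coefficients, together with a cone of sequences defined by the coefficientwise inequalities $a_j^{2}-\rho(q)\,a_{j-1}a_{j+1}\succeq 0$ for all $j$, that is (i) preserved by $\mathfrak{L}$ and (ii) satisfied by the $q$-binomial column. Property (ii) asks the initial column to satisfy the $\rho$-factor inequality, a strengthening of the classical $q$-log-concavity of $\begin{bmatrix} n \\ k \end{bmatrix}_q$ in $n$ (the case $\rho=1$). The point of (i) is that the $\rho$-factor condition is self-reproducing: it implies ordinary $q$-log-concavity at each stage, because $a_j^{2}-a_{j-1}a_{j+1}=(a_j^{2}-\rho\,a_{j-1}a_{j+1})+(\rho-1)\,a_{j-1}a_{j+1}$ is a sum of two coefficientwise-nonnegative polynomials, and it simultaneously feeds the same hypothesis into the next application of $\mathfrak{L}$. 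Granting (i) and (ii), infinite $q$-log-concavity of the column follows by induction on the number of applications of $\mathfrak{L}$.

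The hard part will be establishing the preservation step (i) coefficientwise. For real sequences McNamara and Sagan use the irrational value $r=\tfrac{3+\sqrt5}{2}$, and the corresponding inequality $b_j^{2}\ge r\,b_{j-1}b_{j+1}$ is proved by an algebraic manipulation that does not obviously respect the finer structure of polynomial coefficients. Finding a polynomial or rational $\rho(q)$ that specializes to this constant at $q=1$ and for which the preservation inequality holds coefficientwise is the crux; the negative intermediate coefficients exhibited above show that the positivity must come from cancellation across the whole range of $j$, so a direct term-by-term estimate cannot succeed, and one likely needs a Lindstr\"{o}m--Gessel--Viennot or tableau model to interpret the differences as counting concrete objects. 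As a fallback, and to accumulate evidence while searching for the right $\rho(q)$, I would verify the conjecture for each fixed $k$ up to a large bound by the same computer-assisted procedure that McNamara and Sagan carried out for $n\le 1450$, which should at least confirm the shape of the invariant cone before attempting the general proof.
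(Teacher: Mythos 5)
This statement is an open conjecture of McNamara and Sagan; the paper records it without proof (indeed, it is offered as ``compensation'' for their theorem that the \emph{row} sequences fail infinite $q$-log-concavity), so the only question is whether your argument actually closes it --- and it does not. Your verifiable computations are all correct: the $k=0$ case, the identity $[n]_q^2-[n-1]_q[n+1]_q=q^{n-1}$ for $k=1$ (with the small caveat that, padding with zeros for $n<k$, the second application of $\mathfrak{L}$ produces the delta sequence $1,0,0,\dots$ rather than the zero sequence --- still nonnegative and fixed by $\mathfrak{L}$, so harmless), the generating function $\prod_{i=0}^{k}(1-q^i x)^{-1}$, and the action of $\mathfrak{L}$ on sums of geometrics. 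But all of this is bookkeeping; the entire weight of the proof rests on your steps (i) and (ii) --- an $\mathfrak{L}$-invariant cone defined by a coefficientwise $\rho$-factor inequality containing the $q$-binomial column --- which you explicitly leave open, with computer verification as a fallback. A research plan with the crux deferred is not a proof, so the conjecture remains exactly as open as the paper leaves it.

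Moreover, step (ii) provably \emph{fails} in the form you propose. Write $a_j=\begin{bmatrix} j \\ k \end{bmatrix}_q$, so $\deg_q a_j = k(j-k)$ with leading coefficient $1$ and constant term $1$. Then $a_j^2$ and $a_{j-1}a_{j+1}$ are both monic of the same degree $2k(j-k)$ and both have constant term $1$, so $a_j^2-a_{j-1}a_{j+1}$ has vanishing top and bottom coefficients. Consequently, if $\rho(q)-1$ is coefficientwise nonnegative and nonzero, say with a positive coefficient in degree $d\geq 0$, then $a_j^2-\rho(q)\,a_{j-1}a_{j+1}$ has a strictly negative coefficient in degree $2k(j-k)+d$. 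Hence the $q$-binomial column satisfies your cone condition only for $\rho=1$, which yields no self-improving hypothesis, and the naive coefficientwise transplant of the McNamara--Sagan $r$-factor device is dead on arrival: any workable $q$-analogue must twist the comparison, e.g.\ by monomial shifts of the form $a_j^2 - q^{e_j}\rho(q)\,a_{j-1}a_{j+1}$, structure your proposal does not supply. (A minor further point: $r=\tfrac{3+\sqrt5}{2}$ is irrational, so a rational $\rho(q)$ can at best satisfy $\rho(1)\geq r$, not $\rho(1)=r$.) To turn your plan into a proof you would need to exhibit the twisted inequality, prove it holds for the column, and prove $\mathfrak{L}$ preserves it coefficientwise --- none of which is currently done by you, by the paper, or in the literature it cites.
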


Another $q$-analog of the binomial coefficients that arisies in the study of 
quantum groups is defined by 
\begin{equation}
\langle{ n \rangle}  := \frac{q^{n}-q^{-n}}{q-q^{-1}} = 
\frac{1}{q^{n-1}}( 1 + q^{2}+q^{4} + \cdots + q^{2n-2}).
\end{equation}
\noindent
From here we proceed as in the case of Gaussian polynomials and define
\begin{equation}
\quant := \frac{\quann !}{\quank ! \, \quannk !}
\end{equation}
\noindent
where $\quann! = \quanone \langle{2 \rangle} \cdots \langle{n \rangle}$.  For 
these coefficients 
McNamara and Sagan have proposed

\begin{conjecture}
\noindent
a) The row sequence $ \left\{ \quant \right\}_{k \geq 0}$ is infinitely 
$q$-log-concave for all $n \geq 0$. 

\noindent
b) The column sequence $ \left\{ \quant \right\}_{n \geq k}$ is infinitely 
$q$-log-concave for all fixed $n \geq 0$. 

\noindent
c) For all integers $0 \leq u < v$, the sequence 
$ \left\{ \quanuv \right\}_{m \geq 0}$ is infinitely 
$q$-log-concave for all $n \geq 0$. 
\end{conjecture}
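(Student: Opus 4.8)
The plan is to exploit the elementary but decisive identity
\[
\quant = q^{-k(n-k)} \begin{bmatrix} n \\ k \end{bmatrix}_{q^{2}},
\]
which one checks by writing $\langle j \rangle = q^{-(j-1)}[j]_{q^{2}}$ and telescoping the powers of $q$. This exhibits each $\quant$ as a \emph{symmetric} (palindromic) Laurent polynomial, supported on $[-k(n-k),\, k(n-k)]$ and invariant under $q \mapsto q^{-1}$. Since $\mathfrak{L}$ acts through the bilinear expression $b_{k} = a_{k}^{2} - a_{k-1}a_{k+1}$, it preserves the class of symmetric Laurent polynomials; hence the entire orbit $\mathfrak{L}^{(j)}(\{\quant\})$ stays inside this class, and infinite $q$-log-concavity becomes the assertion that every iterate has nonnegative coefficients as a symmetric Laurent polynomial. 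All three sequences in the conjecture---the row, the column, and the $(u,v)$-diagonal---consist of quantum binomials, hence of symmetric Laurent polynomials, so this reduction applies uniformly to parts a), b) and c).

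First I would record the structural advantage created by the monomial twist. In $b_{k} = a_{k}^{2} - a_{k-1}a_{k+1}$ the positive term $a_{k}^{2}$ is supported on $[-2k(n-k),\, 2k(n-k)]$, whereas $a_{k-1}a_{k+1}$ is supported on the strictly smaller interval $[-2k(n-k)+2,\, 2k(n-k)-2]$; the two outermost coefficients on each side therefore come from $a_{k}^{2}$ alone and are automatically positive. This \emph{boundary margin} of width two is exactly what the ordinary Gaussian polynomials lack---and it is presumably why the theorem just above shows the untwisted row is \emph{not} infinitely $q$-log-concave while the balanced version is conjectured to be. Tracking the degree bookkeeping, one finds that a further application of $\mathfrak{L}$ doubles the gap, so the margin not only persists but grows at each stage; the analytic heart of the conjecture is to show that this growing margin is enough to force positivity throughout the interior as well.

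The concrete mechanism I would try to install is a $q$-analogue of the McNamara--Sagan self-reproducing inequality. Where they show that the cone of sequences satisfying $a_{k}^{2} \geq r\, a_{k-1}a_{k+1}$ is invariant under $\mathfrak{L}$ once $r \geq \tfrac{3+\sqrt{5}}{2}$, here the scalar $r$ should be replaced by a fixed power $q^{t}$ (or a short symmetric polynomial), giving the coefficientwise condition that $a_{k}^{2} - q^{t}a_{k-1}a_{k+1}$ has nonnegative coefficients. The program is then: (i) identify the smallest such $t$ compatible with the boundary-margin computation above; (ii) verify the base case, namely that the $q$-binomials themselves satisfy this strengthened condition---a finite, palindromic check that should follow from the classical $q$-log-concavity of Gaussian polynomials together with the degree accounting; and (iii) prove the reproduction lemma, that the strengthened condition is stable under one application of $\mathfrak{L}$, which then yields the full result by induction on the number of applications.

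The hard part will be step (iii): upgrading a \emph{numerical} self-reproducing inequality to a \emph{coefficientwise} one. A difference of two symmetric Laurent polynomials can have positive extreme coefficients yet fail positivity somewhere in the interior, and the McNamara--Sagan scalar argument gives no control there; one genuinely needs a combinatorial or injective model for the coefficients of $a_{k}^{2} - q^{t}a_{k-1}a_{k+1}$, or a manifestly positive summation formula for the iterated bracket, to rule out interior cancellation. A secondary obstacle is uniformity in the unbounded directions: for the column sequence b) and the $(u,v)$-sequence c) the degrees of the entries grow without bound, so the exponent $t$ and the reproduction lemma must be made independent of $n$ and $m$---something a finite computation cannot certify and where, I expect, the real difficulty of the conjecture resides.
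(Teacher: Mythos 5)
The statement you have attempted is an \emph{open conjecture}, due to McNamara and Sagan; the paper supplies no proof and none is known --- it reports only computational verification for all $n \leq 24$ with $v \leq 10$, together with the cautionary remark that for $u > v$ the diagonal sequence fails even plain $q$-log-concavity. Your submission is likewise not a proof, and you say so yourself: steps (ii) and (iii) of your program --- the strengthened base case and the coefficientwise reproduction lemma --- are left entirely open, and (iii) is where all the content lives. The obstacle is structural, not technical: the McNamara--Sagan argument with $r = \frac{3+\sqrt{5}}{2}$ operates in the totally ordered reals, where one may divide inequalities, extract square roots, and compare ratios; none of these moves survives passage to the cone of symmetric Laurent polynomials with nonnegative coefficients, which is only a partial order and is not closed under the requisite algebra. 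Your preliminary reductions are sound and match the paper's own setup --- the normalization $\quant = q^{-k(n-k)} \bigl[ \begin{smallmatrix} n \\ k \end{smallmatrix} \bigr]_{q^{2}}$ is precisely the identity recorded there, and symmetry under $q \mapsto q^{-1}$ is indeed preserved by $\mathfrak{L}$ --- but beyond this the proposal is a research plan, not an argument.

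There is also a concrete error in the one structural claim you do make. Your ``boundary margin of width two,'' advertised as the uniform advantage behind all three parts, fails for part b): for the column sequence at fixed $k$, the entry $a_{n}$ is supported on $[-k(n-k),\, k(n-k)]$, and since $k(n-1-k) + k(n+1-k) = 2k(n-k)$, the product $a_{n-1}a_{n+1}$ has \emph{exactly} the same support as $a_{n}^{2}$; both are monic at the extremes, so the outermost coefficients of $a_{n}^{2} - a_{n-1}a_{n+1}$ cancel identically rather than being ``automatically positive.'' So the mechanism you propose does not even get started on b), and your heuristic linking the margin to the truth of the conjecture needs repair at the very first application of $\mathfrak{L}$. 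For part c) the margin works out to $2v(v-u)$, which means your scheme uses the hypothesis $u < v$ silently --- this should be made explicit, since it is exactly the boundary of failure noted in the paper --- and your assertion that each further application of $\mathfrak{L}$ ``doubles the gap'' is stated without verification and cannot be taken for granted once interior cancellations alter the supports.
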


This conjecture has been verified for all $n \leq 24$ with $v \leq 10$. When 
$u>v$, using 
\begin{equation}
\quant = \frac{1}{q^{nk-k^{2}}} \begin{bmatrix} n \\ k \end{bmatrix}_{q^{2}}
\end{equation}
\noindent
one checks that the lowest degree of 
$\quanthree^{2} - \quantfour$ is $-1$, so the sequence is not even 
$q$-log-concave. Sagan and McNamara observe that when 
$u=v$, the quantum groups analoge has exactly the same 
behavior as the Gaussian polynomials. 

Newton began the study of log-concave sequences by establishing the following
result (paraphrased in Section $2.2$ of \cite{hardy7}). 

\begin{theorem}
Let $\{ a_{k} \}$ be a finite sequence  of positive real numbers. Assume all 
the roots of the polynomial 
\begin{equation}
P[a_{k};x]:= a_{0} + a_{1}x + \cdots + a_{n}x^{n}
\end{equation}
\noindent
are real. Then the sequence $\{ a_{k} \}$ is log-concave. 
\end{theorem}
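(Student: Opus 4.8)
The plan is to deduce the statement from the classical \emph{Newton inequalities}, which assert the stronger chain
\[
a_{k}^{2} \geq \frac{(k+1)(n-k+1)}{k(n-k)} \, a_{k-1} a_{k+1}, \qquad 1 \leq k \leq n-1.
\]
Since the rational factor $\tfrac{(k+1)(n-k+1)}{k(n-k)}$ is $\geq 1$ throughout the range $1 \leq k \leq n-1$, these inequalities immediately yield the desired log-concavity $a_{k}^{2} \geq a_{k-1}a_{k+1}$. Thus it suffices to establish Newton's inequality for a single fixed index $k$.

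First I would record the two operations on real polynomials under which the hypothesis \emph{all roots real} is stable. (i) \textbf{Differentiation:} if $P$ has degree $n$ with all $n$ roots real (counted with multiplicity), then $P'$ has $n-1$ real roots, again with multiplicity; a root of $P$ of multiplicity $r \geq 2$ descends to a root of $P'$ of multiplicity $r-1$, and between consecutive distinct roots Rolle's theorem supplies one more, which accounts for all $n-1$. (ii) \textbf{Reversal:} because $a_{0}>0$, the value $0$ is not a root of $P$, so the reciprocal polynomial $\tilde{P}(x) := x^{n} P(1/x)$ has as roots exactly the reciprocals of the roots of $P$, hence all real; its coefficient sequence is that of $P$ read backwards.

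The core of the argument is to combine these two operations so as to strip a degree-$n$ real-rooted polynomial down to a real-rooted quadratic whose three coefficients are proportional to $a_{k-1}, a_{k}, a_{k+1}$. Concretely, I would differentiate $P$ exactly $k-1$ times, annihilating the terms below $x^{k-1}$ and leaving a polynomial of degree $d := n-k+1$ whose three lowest coefficients are $(k-1)!\,a_{k-1}$, $k!\,a_{k}$ and $\tfrac{(k+1)!}{2}\,a_{k+1}$; then reverse it to bring these to the top; then differentiate $d-2$ further times to discard everything below degree two. Each step preserves real-rootedness by (i) and (ii), and since $a_{k-1}>0$ the resulting quadratic $R(x)=\alpha x^{2}+\beta x+\gamma$ is genuinely of degree two. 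Its nonnegative-discriminant condition $\beta^{2}\geq 4\alpha\gamma$ is precisely Newton's inequality once the factorials are simplified, using $\tfrac{(k!)^{2}}{(k-1)!(k+1)!}=\tfrac{k}{k+1}$ and $\tfrac{((d-1)!)^{2}}{d!(d-2)!}=\tfrac{d-1}{d}$, which collapse to the factor $\tfrac{(k+1)(n-k+1)}{k(n-k)}$ after substituting $d=n-k+1$.

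The routine but error-prone part, where I expect the main bookkeeping obstacle, is tracking the exact factorial coefficients through the three operations and confirming that they collapse to the clean factor above; one must also verify the boundary behavior, namely that each differentiation and reversal stays within the stated degree and that no required leading coefficient vanishes, both of which are guaranteed by the positivity of the $a_{j}$. Beyond this accounting, and the multiplicity bookkeeping in Rolle's theorem dispatched as in (i), I anticipate no conceptual difficulty.
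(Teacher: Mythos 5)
Your proof is correct: differentiating $k-1$ times, reversing, and differentiating $n-k-1$ more times to reach a real-rooted quadratic whose nonnegative discriminant gives Newton's inequality $a_{k}^{2} \geq \tfrac{(k+1)(n-k+1)}{k(n-k)}\,a_{k-1}a_{k+1} \geq a_{k-1}a_{k+1}$ is the classical argument, and your factorial bookkeeping checks out (the only wording slip is in your step (ii), where the reversal is applied to $P^{(k-1)}$, so the nonvanishing constant term required is $(k-1)!\,a_{k-1}>0$ rather than $a_{0}>0$ --- which positivity supplies, as you note later). The paper itself gives no proof, only citing Newton's result as paraphrased in Section 2.2 of Hardy--Littlewood--P\'olya, where essentially this derivative-and-reversal proof appears, so your attempt correctly fills in the omitted details along the intended route.
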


McNamara and Sagan \cite{mcnamara1} and, independently, R. Stanley 
have proposed the next conjecture.

\begin{conjecture}
Let $\{ a_{k} \}$ be a finite sequence of positive real numbers. If 
$P[a_{k};x]$ has only real roots then the  same is true for 
$P[ \mathfrak{L}(a_{k}); x]$. 
\end{conjecture}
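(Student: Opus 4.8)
The plan is to attack the conjecture by \emph{polarizing} the quadratic operator $\mathfrak{L}$ and thereby converting a single nonlinear question into a family of \emph{linear} real-rootedness-preservation questions, to which modern transfer machinery applies. First I would normalize: since the $a_{k}$ are positive and $P[a_{k};x]$ has only real roots, all roots are negative, so $P[a_{k};x]=a_{n}\prod_{i=1}^{n}(x+r_{i})$ with $r_{i}>0$ and $a_{k}=e_{n-k}(r_{1},\dots,r_{n})$. In this normalization each coefficient $b_{k}:=a_{k}^{2}-a_{k-1}a_{k+1}$ of $P[\mathfrak{L}(a_{k});x]$ is the $2\times 2$ Toeplitz minor $\det\left(\begin{smallmatrix} a_{k} & a_{k+1}\\ a_{k-1} & a_{k}\end{smallmatrix}\right)$, equivalently a Schur function in the $r_{i}$. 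This already records the strict positivity $b_{k}>0$ (Newton's log-concavity), so the target polynomial has positive coefficients and degree $n$, and the only thing to be produced is the \emph{reality} of its roots.

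The conceptual core is to introduce the symmetric bilinear form associated with $\mathfrak{L}$, namely $B(P,R)_{k}:=a_{k}\rho_{k}-\tfrac12(a_{k-1}\rho_{k+1}+a_{k+1}\rho_{k-1})$ for $R=\sum\rho_{k}x^{k}$, so that $P[\mathfrak{L}(a_{k});x]=B(P,P)$. The key observation is that for a \emph{fixed} polynomial $P_{0}$ the map $L_{P_{0}}\colon R\mapsto B(P_{0},R)$ is linear. If one can show that $L_{P_{0}}$ preserves the class of real-rooted polynomials for every real-rooted $P_{0}$, then taking $P_{0}=P$ and applying $L_{P}$ to the real-rooted polynomial $P$ itself yields that $B(P,P)=P[\mathfrak{L}(a_{k});x]$ is real-rooted. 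Thus the quadratic conjecture is \emph{implied} by a one-parameter family of linear-preserver statements, and to each of these one may apply the Borcea--Br\"anden characterization: $L_{P_{0}}$ preserves real-rootedness precisely when its algebraic symbol $G_{P_{0}}(x,y)=\sum_{j}\binom{n}{j}L_{P_{0}}[x^{j}]\,y^{j}$ is (half-plane) stable, i.e.\ does not vanish when $\imagpart x,\imagpart y>0$.

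From here the plan is to compute $G_{P_{0}}$ explicitly --- a routine expansion gives $L_{P_{0}}[x^{j}]=-\tfrac12 a_{j-2}x^{j-1}+a_{j}x^{j}-\tfrac12 a_{j+2}x^{j+1}$, so the coefficients of $G_{P_{0}}$ are explicit linear combinations of the $a_{k}$ --- and then to establish stability of $G_{P_{0}}$ \emph{using} that $P_{0}$ is real-rooted, for instance by exhibiting $G_{P_{0}}$ as a limit of products of stable factors or by verifying the Hermite--Biehler interlacing of its even and odd parts. A complementary route, useful both as a check and as a possible fallback induction on the degree, is to add one root at a time: writing $\widetilde P=(x+r)P$ one finds the clean recursion $\widetilde Q(x)=(x+r^{2})Q(x)+r\,C(x)$, where $Q=B(P,P)$ and $C$ is the generating polynomial of the neighbouring band-$2$ minors $a_{k-1}a_{k}-a_{k-2}a_{k+1}$; one would then try to prove, by simultaneous induction, an interlacing of $C$ with $Q$ strong enough to keep $(x+r^{2})Q+rC$ real-rooted for every $r>0$.

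I expect the decisive obstacle to lie exactly in the stability step. The polarization is a genuine reduction, but only a \emph{sufficient} one: it is conceivable that $L_{P_{0}}$ fails to preserve real-rootedness on some inputs even while $B(P,P)$ is real-rooted, so the linear-preserver route risks demanding strictly more than the conjecture grants, and $G_{P_{0}}$ need not be stable for every real-rooted $P_{0}$. The honest difficulty is to control a quadratic --- hence nonlinear --- transformation with tools that are fundamentally linear: neither the P\'olya--Schur/Borcea--Br\"anden machinery nor the total nonnegativity of the Toeplitz matrix of $P$ (which yields only the individual $b_{k}\ge 0$) is known to force the \emph{entire} minor sequence $\{b_{k}\}$ to be a P\'olya frequency sequence, which is precisely the statement that $P[\mathfrak{L}(a_{k});x]$ has only real roots. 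Closing that gap --- either by proving stability of the whole family $G_{P_{0}}$, or by completing the interlacing induction for the companion pair $(Q,C)$ --- is the crux, and is the reason the conjecture has so far resisted proof.
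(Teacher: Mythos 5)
This statement is one of the open conjectures of the paper (attributed there to McNamara--Sagan and, independently, to Stanley and to Fisk); the paper offers \emph{no proof} of it, so there is nothing internal to compare your attempt against, and it must be judged on its own terms. On those terms it is not a proof but a research program, and you say as much yourself in the final paragraph. Your verifiable claims do check out: with $B(P,R)_{k}=a_{k}\rho_{k}-\tfrac12(a_{k-1}\rho_{k+1}+a_{k+1}\rho_{k-1})$ one indeed has $B(P,P)_{k}=a_{k}^{2}-a_{k-1}a_{k+1}$, the formula $L_{P_{0}}[x^{j}]=-\tfrac12 a_{j-2}x^{j-1}+a_{j}x^{j}-\tfrac12 a_{j+2}x^{j+1}$ is correct, and writing $\widetilde P=(x+r)P$, so $\tilde a_{k}=a_{k-1}+ra_{k}$, a direct expansion confirms $\tilde b_{k}=b_{k-1}+r^{2}b_{k}+r(a_{k-1}a_{k}-a_{k-2}a_{k+1})$, i.e.\ $\widetilde Q=(x+r^{2})Q+rC$. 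So the groundwork is sound.

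The genuine gap is exactly where you locate it, but it is worth stating why neither of your two routes can close as described. The polarization route replaces the conjecture by the strictly stronger assertion that $L_{P_{0}}$ preserves real-rootedness for \emph{every} real-rooted $P_{0}$; nothing in the conjecture forces this, and since $L_{P_{0}}$ has the negative band terms $-\tfrac12 a_{j\pm2}$, there is no a priori reason its Borcea--Br\"and\'en symbol $G_{P_{0}}$ should be stable --- checking this is not a ``routine expansion'' but the entire content of the problem, and if the stronger statement is false (which you concede is conceivable) the route collapses with no fallback inside it. The inductive route has a parallel defect: keeping $(x+r^{2})Q+rC$ real-rooted for \emph{all} $r>0$ is, by the standard theory of compatible families, equivalent to a common-interleaving relation between $C$ and $(x+r^{2})Q$, and you never formulate an induction hypothesis on the pair $(Q,C)$ that both implies this and is itself propagated when a root is added --- the new $\widetilde C$, with coefficients $\tilde a_{k-1}\tilde a_{k}-\tilde a_{k-2}\tilde a_{k+1}$, mixes $Q$, $C$ and further band minors, so the induction does not close on two polynomials. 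In short: you have correctly reduced the conjecture to two candidate lemmas and correctly diagnosed that both are unproven; since the paper leaves the statement open as well, your proposal neither contradicts nor completes anything in it, but it should not be mistaken for a proof.
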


\medskip

This conjecture was also independently made by Fisk.  See \cite{mcnamara1} 
for the complete details on the conjecture. \\

The polynomials 
$P_{m}(a)$ in (\ref{polyP-def}) are the generating function for the 
sequence $\{ d_{l,m} \}$ described here. It is an unfortunate fact that 
they do not have real roots
\cite{bomouni3} so these conjecture would not imply Conjecture 
\ref{conj-inf}. In spite of this, the asymptotic behavior of these zeros 
has remarkable properties. Dimitrov \cite{dimitrov1} has shown that, in the 
right scale, the zeros converge to a lemniscate. \\

The infinite-log-concavity of $\{ d_{l,m} \}$ has resisted all our efforts. It 
remains to be established. \\

\medskip

\noindent
{\bf Acknowledgements}. The authors wish to thank B. Sagan for many comments on 
an earlier version of the paper. 
The first author acknowledges the partial support of 
NSF-DMS 0713836.

\bigskip

\end{document}